\numberwithin{equation}{section}
\newtheorem{thm}{Theorem}[section]
\newtheorem{cor}[thm]{Corollary}
\newtheorem{lem}[thm]{Lemma}
\newtheorem{pro}[thm]{Proposition}
\theoremstyle{remark}
\theoremstyle{definition}
\newtheorem{exa}[thm]{Example}
\newcommand*{\C}{\mathbb{C}}
\newcommand*{\R}{\mathbb{R}}
\newcommand*{\Z}{\mathbb{Z}}
\newcommand*{\N}{\mathbb{N}}
\newcommand*{\REP}{\overline{\mathbb{R}}_+}
\newcommand{\D}{\mathrm{d}}
\newcommand*{\trc}[1]{\mathrm{tr}(#1)}
\newcommand{\ball}[1]{\mathrm{B}_{#1}}
\newcommand*{\borel}[1]{\mathscr{B}(#1)}
\newcommand*{\dom}[1]{\mathscr{D}(#1)}
\newcommand*{\PLN}{\mathscr{P}}
\newcommand*{\hh}{\mathcal H}
\newcommand*{\ogr}[1]{\mathbf{B}(#1)}
\newcommand*{\is}[2]{\langle #1, #2\rangle}
\newcommand*{\hsn}[1]{\|#1\|_{\mathrm{HS}}}
\newcommand*{\hs}[1]{\mathbf{HS}(#1)}
\newcommand*{\TP}[1]{\mathsf{T}_{#1}}
\newcommand*{\IGR}[1]{\mathsf{J}_{#1}}
\newcommand*{\PROJ}{\mathsf{P}}
\newcommand*{\SB}[2]{{#1}^2(\gm)\otimes #2}
\newcommand*{\gm}{\mu_{\mathbf{G}}}
\newcommand*{\KHH}{\mathscr{K}\otimes \hh}
\title[Compact vectorial Toeplitz operators]{Compact vectorial Toeplitz operators on the Segal-Bargmann space}
\author{Tomasz Beberok}
\address{Department of Applied Mathematics, University of Agriculture in Krakow, ul. Balicka 253c, 30-198 Krakow, Poland.}
\email{tomasz.beberok@ur.krakow.pl}
\author{Piotr Budzyński}
\address{Department of Applied Mathematics, University of Agriculture in Krakow, ul. Balicka 253c, 30-198 Krakow, Poland.}
\email{piotr.budzynski@ur.krakow.pl}
\author{Dong-O Kang}
\address{Department of Mathematics, Chungnam National University, 30 Daejeon, Korea}
\email{dokang@hcnuac.kr}
\keywords{Compact operator, Toeplitz operator, Segal-Bargmann space, integral operator, vector-valued function, operator-valued kernel}
\subjclass[2010]{Primary 47B35, 47B32, 47B07; Secondary 47B10, 47G10}
\begin{document}
\setstretch{1.1}
\maketitle
\begin{abstract}
We provide a sufficient condition for the compactness of a Toeplitz operator acting on the Segal-Bargmann space of vector-valued functions written in terms of an associated operator-valued kernel.
\end{abstract}
\section{Introduction}
A well-known results due to Stroethoff states that a Toeplitz operator $T_\phi$, with $\phi\in L^\infty(\C^n)$, acting on the Segal-Bargmann space $\mathcal{B}$ is compact if and only if $\|\mathrm{P}(\phi\circ\tau_\lambda)\|\to0$ as $|\lambda|\to\infty$, where $\tau_\lambda$ is the translation on $\C^n$ by $\lambda$ and $\mathrm{P}$ is the orthogonal projection from $L^2(\gm)$ onto $\mathcal{B}$ (see \cite[Theorem 5]{str-mmj-1992}). This result is related to a characterization of compact multiplication operators in terms of Berezin symbols due to Berger and Coburn (see \cite[Theorem C]{ber-cob-tams-1987}). The paper aims at generalizing Stroethoff's result to the context of Toeplitz operators acting on the Segal-Bargmann space of vector-valued functions.

Toeplitz operators on Segal-Bargmann spaces arise in quantization of classical mechanics  and are related to pseudodifferential operators (see \cite{bar-cpam-61, gro-lou-ste-aif-68, gui-ieot-84, how-jfa-80, new-sha-bams-66, new-sha-ams-68, seg}). They have been studied since the work of Berezin (see \cite{ber-mi-1974, ber-mi-1975}) in the classical context of complex-valued entire functions (see, e.g., \cite{bau-lee-jfa-2011, bau-cob-isr-jfa-2010, cob-isr-li-tams-2011,eng-jmaa-2010,isr-zhu-ieot-2010, jan-sm-1991, jan-sto-jfa-1994}) and also in the more general setting of vector-valued functions (see, e.g., \cite{cic-ieot-1999, cic-sm-2002,cic-sha-ms-2003}). The literature concerning these operators is broad and still growing.

The main result of the paper states that a vectorial Toeplitz operator $\TP{\Phi}$ with $L^\infty$ symbol $\Phi$ is compact whenever its "oscilation at infinity" is small (see Theorem \ref{main}). The condition for that, namely condition \eqref{stendahl}, is a generalization of the Stroethoff's one and employs the Hilbert-Schmidt norm. The reason for using the Hilbert-Schmidt norm lies in the fact that the compactness of $\TP{\Phi}$ is shown by an approximating its adjoint, proved to be a vectorial integral operator with kernel related to translations (see Proposition \ref{lem2}), with a sequence of Hilbert-Schmidt vectorial integral operators. To this end we adapt a method used in the classical context (see Lemma \ref{inths}, Proposition \ref{inths4new}, and Corollary \ref{venti}). It remains an open question whether condition \eqref{stendahl} is necessary for the compactness of $\TP{\Phi}$. A necessary condition we obtained (see Proposition \ref{necc}) turns out to be essentially weaker (see Examples \eqref{star1}, \eqref{star2}, and \ref{taebaek}).
\section{Preliminaries}
In all what follows $\R$ and $\C$ stand for the sets of real and complex numbers, respectively, $\N$ denotes the set of all natural numbers $\{1,2,3,\ldots\}$, $\Z_+=\N\cup\{0\}$, $\R_+=[0,\infty)$, and $\REP=\R_+\cup\{\infty\}$. For $R\in(0,\infty)$, $\ball{R}$ stands for the ball $\{z\in \C^n\colon |z|\leqslant R\}$, where $|\cdot|$ is the standard Euclidean norm in $\C^n$. The following standard multiindex notation is used: $z^j=z_1^{j_1}\cdots z_n^{j_n}$, $j!=j_1!\cdots j_n!$, and $|j|=j_1+\ldots+j_n$ for all $j=(j_1,\ldots,j_n)\in\Z_+^n$ and $z=(z_1,\ldots,z_n)\in\C^n$, $n\in\N$.

Let $\hh$ be a (complex) Hilbert space, with the inner product  $\is{\cdot}{-}$ and the corresponding norm $\|\cdot\|$. Let $A$ be a (linear) operator in $\hh$. Then $\dom{A}$ stands for the domain of $A$ and $A^*$ stands for the adjoint of $A$, whenever it exists. $\ogr{\hh}$ denotes the algebra of all bounded operators on $\hh$ with the standard operator norm $\|\cdot\|_{\ogr{\hh}}$. If this leads to no confusion we write $\|\cdot\|$ instead of $\|\cdot\|_{\ogr{\hh}}$. The Banach space of all the Hilbert-Schmidt operators equipped with by Hilbert-Schmidt norm $\hsn{\cdot}$ is denoted by $\hs{\hh}$. Throughout the paper we adhere to the convention that if $A$ is not a Hilber-Schmidt operator, then $\hsn{A}=\infty$.

Let $\nu$ be a positive measure on $\borel{\C^n}$, the $\sigma$-algebra of Borel sets in $\C^n$. Then as usual $L^2(\nu)$ stands for the space of all square-summable (with respect to $\nu$) complex-valued Borel functions on $\C^n$. By $\nu\otimes\nu$ we denote the product measure on $\borel{\C^n\times\C^n}=\borel{\C^{2n}}$. 

Let $\gm$ be the $n$-dimensional Gaussian measure on $\C^n$, i.e.,
\begin{align*}
\gm(\sigma)=\frac{1}{\pi^n}\int_{\sigma}\exp{\big(-|z|^2\big)}\D V(z),\quad \sigma\in\borel{\C^n},
\end{align*}
where $V$ denotes the Lebesgue measure on $\borel{\C^n}$. The {\em Segal-Bargmann space} $\mathcal{B}$  (a.k.a. the {\em Fock space}) is a closed subspace of $L^2(\gm)$ consisting of all analytic functions belonging to $L^2(\gm)$. It is well-known that $\mathcal{B}$ is a RKHS with the kernel $k(z,w)=\exp{\is{z}{w}}$, $z,w\in \C^n$. Given $\lambda \in \C^n$, we define $k_\lambda(z)=\exp{\is{z}{\lambda}}$, $z\in\C^n$.

Let $\hh$ be a separable Hilbert space. The space of all analytic functions $F\colon \C^n\to \hh$ such that $\int_{\C^n}\|F(z)\|^2\D\gm(z)<\infty$ is to be identified with $\mathcal{B}\otimes \hh$, the Hilbert tensor product of $\mathcal{B}$ and $\hh$. We call it a {\em (vectorial) Segal-Bargmann} space. 
For $f\in\mathcal{B}$ and $h\in\hh$, $f\otimes h$ stands for the function defined as $(f\otimes h) (z)=f(z)h$, $z\in\C^n$. Then $\KHH$ stands for the linear span of $\{k_\lambda\otimes g\colon \lambda\in\C^n, g\in\hh\}$. The Segal-Bargmann space has the following reproducing property:
\begin{align}\label{reprod}
\is{F}{k_z\otimes h}=\is{F(z)}{h},\quad z\in\C^n, h\in\hh, F\in\mathcal{B}\otimes \hh.
\end{align}
Now, given a Borel function $\Phi\colon \C^n\to \ogr{\hh}$, we define the {\em (vectorial) Toeplitz operator}
\begin{align*}
\TP{\Phi}\colon \mathcal{B}\otimes \hh\supseteq\dom{\TP{\Phi}}\to\mathcal{B}\otimes \hh  \end{align*}
by the formula
\begin{align*}
\dom{\TP{\Phi}}&=\big\{F\in \mathcal{B}\otimes \hh\colon \Phi F\in \SB{L}{\hh}\big\},\\
\TP{\Phi} F&= \PROJ(\Phi F),\quad F\in\dom{\TP{\Phi}},
\end{align*}
where $\PROJ$ denotes the orthogonal projection from $\SB{L}{\hh}$ onto $\mathcal{B}\otimes \hh$.
Clearly, due to the reproducing property \eqref{reprod} we have
\begin{align}\label{toeplitzform}
\is{(\TP{\Phi}F) (z)}{h}=\int_{\C^n}\is{\Phi(w)F(w)}{h}\overline{k_z(w)}\D\gm(w),\quad z\in \C^n, F\in\dom{\TP{\Phi}}, h\in\hh.
\end{align}

Given a {\em $\ogr{\hh}$-valued kernel on $\C^n$}, i.e., a Borel function $\Theta\colon \C^n\times \C^n\to \ogr{\hh}$, we define the  {\em (vectorial) integral operator} \begin{align*}
\IGR{\Theta}\colon \mathcal{B}\otimes \hh\supseteq \dom{\IGR{\Theta}}\to \mathcal{B}\otimes \hh
\end{align*}
with the following formula
\begin{align*}
\dom{\IGR{\Theta}}&=\big\{F\in \mathcal{B}\otimes \hh\colon J_\Theta F\in \SB{L}{\hh}\big\},\\
\IGR{\Theta} F&= \PROJ\big( J_\Theta F\big),\quad F\in\dom{\IGR{\Theta}},
\end{align*}
where 
\begin{align*}
\big(J_\Theta F\big) (z) :=\int_{\C^n} \Theta(z,w)F(w)\D\gm(w),\quad z\in\C^n, F\in\mathcal{B}\otimes \hh \end{align*}
is understood in the weak sense. The kernel $\Theta$ is called {\em Hilbert-Schmidt} if it satisfies the condition
\begin{align}\label{hskernel}
\int_{\C^n}\int_{\C^n} \hsn{\Theta(z,w)}^2\D\gm(z)\D\gm(w)<\infty.
\end{align}
Let us recall a vectorial counterpart of the well-known classical result considering integral operators (see \cite[p. 253]{bir-sol}). For completeness and the reader convenience we supply a proof (different one than that in \cite{bir-sol}).
\begin{lem}\label{inths}
Let $\Theta\colon \C^n\times\C^n\to\ogr{\hh}$ be a Hilbert-Schmidt kernel. Then $\IGR{\Theta}$ is a Hilbert-Schmidt operator on $\mathcal{B}\otimes \hh$.
\end{lem}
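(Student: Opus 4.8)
The plan is to reduce the assertion to the classical Hilbert-Schmidt criterion for scalar integral operators, while keeping track of the vector structure and of the reproducing-kernel projection $\PROJ$. The first step is to check that $\IGR{\Theta}$ is defined on all of $\mathcal{B}\otimes\hh$. Using $\|\Theta(z,w)\|_{\ogr{\hh}}\leqslant\hsn{\Theta(z,w)}$ together with the Cauchy-Schwarz inequality one gets, for $\gm$-a.e.\ $z$ and every $F\in\mathcal{B}\otimes\hh$,
\[
\|(J_\Theta F)(z)\|\leqslant\int_{\C^n}\hsn{\Theta(z,w)}\,\|F(w)\|\D\gm(w)\leqslant\Big(\int_{\C^n}\hsn{\Theta(z,w)}^2\D\gm(w)\Big)^{1/2}\|F\|,
\]
where the inner integral is finite for $\gm$-a.e.\ $z$ by \eqref{hskernel} and Tonelli's theorem. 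This shows that the weak integral $(J_\Theta F)(z)$ exists (Riesz representation) and, by separability of $\hh$, defines a measurable $\hh$-valued map, so that $J_\Theta F\in\SB{L}{\hh}$ with $\|J_\Theta F\|\leqslant c\,\|F\|$, $c^2$ being the left-hand side of \eqref{hskernel}; hence $\dom{\IGR{\Theta}}=\mathcal{B}\otimes\hh$ and, $\PROJ$ being a contraction, $\|\IGR{\Theta}F\|\leqslant\|J_\Theta F\|$ for all $F$.

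The second step is to estimate the Hilbert-Schmidt sum of $\IGR{\Theta}$ against a basis adapted to the tensor structure. I would pick orthonormal bases $\{f_k\}$ of $\mathcal{B}$ and $\{h_l\}$ of $\hh$, so that $\{f_k\otimes h_l\}$ is an orthonormal basis of $\mathcal{B}\otimes\hh$, and expand $(J_\Theta(f_k\otimes h_l))(z)$ along $\{h_m\}$: its $m$-th coefficient equals $\is{f_k}{\,\overline{\is{\Theta(z,\cdot)h_l}{h_m}}\,}_{L^2(\gm)}$. Since $\{f_k\}$ is an orthonormal system in $L^2(\gm)$ (this is the one place where the scalar argument must be adjusted), Bessel's inequality gives
\[
\sum_{k}\|(J_\Theta(f_k\otimes h_l))(z)\|^2\leqslant\sum_{m}\int_{\C^n}\big|\is{\Theta(z,w)h_l}{h_m}\big|^2\D\gm(w)=\int_{\C^n}\|\Theta(z,w)h_l\|^2\D\gm(w).
\]
Summing over $l$ with $\sum_l\|\Theta(z,w)h_l\|^2=\hsn{\Theta(z,w)}^2$, integrating over $z$, and interchanging sums with integrals freely by Tonelli's theorem, I obtain
\[
\sum_{k,l}\|\IGR{\Theta}(f_k\otimes h_l)\|^2\leqslant\sum_{k,l}\|J_\Theta(f_k\otimes h_l)\|^2\leqslant\int_{\C^n}\int_{\C^n}\hsn{\Theta(z,w)}^2\D\gm(z)\D\gm(w)<\infty,
\]
which is precisely the Hilbert-Schmidt property of $\IGR{\Theta}$.

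I expect the main obstacle to be the soft measure- and operator-theoretic bookkeeping in the first step: one must make sure that the weakly defined $(J_\Theta F)(z)$ genuinely yields an element of $\SB{L}{\hh}$ — measurability from separability of $\hh$, square-integrability from the Cauchy-Schwarz bound above — since only then does the Hilbert-Schmidt sum make sense and the operator $\IGR{\Theta}$ become everywhere defined. The remaining computation is the classical one; the sole novelty is that $\{f_k\}$ is only an orthonormal system, not a basis, of $L^2(\gm)$, which is harmless because $\PROJ$ does not increase norms and Bessel's inequality is all that is needed.
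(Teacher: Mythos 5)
Your argument is correct, and it goes by a genuinely different route than the paper. The paper first notes boundedness of $\IGR{\Theta}$ (as you do), then views $\Theta$ as an element of the Bochner space $L^2(\gm\otimes\gm;\hs{\hh})$, approximates it by simple $\hs{\hh}$-valued kernels $\Theta_j=\sum_l\chi_{\varOmega_l}S_l$, checks that each $\IGR{\Theta_j}$ is Hilbert--Schmidt with norm controlled by $\|\Theta_j\|_{L^2(\gm\otimes\gm;\hs{\hh})}$, and concludes by passing to the limit in the (complete) Hilbert--Schmidt class, using the operator-norm estimate $\|\IGR{\Theta_j}-\IGR{\Theta}\|\leqslant\|\Theta_j-\Theta\|_{L^2(\gm\otimes\gm;\hs{\hh})}$. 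You instead bound the Hilbert--Schmidt sum directly: expanding $(J_\Theta(f_k\otimes h_l))(z)$ along $\{h_m\}$, applying Bessel's inequality to the orthonormal system $\{f_k\}$ in $L^2(\gm)$ (correctly noting it is not a basis of $L^2(\gm)$, which is harmless), summing over $l$, and integrating via Tonelli, with $\PROJ$ a contraction. Your route is more elementary and self-contained — it avoids Bochner measurability of $\Theta$ as an $\hs{\hh}$-valued map and the density of simple functions, both of which the paper invokes without comment — and it yields the explicit estimate $\hsn{\IGR{\Theta}}\leqslant\big(\int_{\C^n}\int_{\C^n}\hsn{\Theta(z,w)}^2\D\gm(z)\D\gm(w)\big)^{1/2}$ in one pass. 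The paper's approximation scheme buys a reduction of the vector-valued case to trivial simple-kernel computations and stays close to the classical Birman--Solomjak treatment, at the cost of the extra Bochner-space machinery. Both proofs rest on the same preliminary step, namely that the Cauchy--Schwarz bound $\|(J_\Theta F)(z)\|\leqslant\big(\int_{\C^n}\hsn{\Theta(z,w)}^2\D\gm(w)\big)^{1/2}\|F\|$ makes $\IGR{\Theta}$ everywhere defined and bounded, so that the basis-sum criterion for membership in $\hs{\mathcal{B}\otimes\hh}$ applies.
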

\begin{proof}
Since $\|A\|\leqslant \hsn{A}$ for every Hilbert-Schmidt operator $A$, the Cauchy-Schwarz inequality and \eqref{hskernel} imply that the operator $\IGR{\Theta}$ belongs to $\ogr{\mathcal{B}\otimes \hh}$.

Due to \eqref{hskernel} and the fact that $\hs{\hh}$ is separable, the kernel $\Theta$ is belongs to the Bochner-Lebesgue space $L^2\big(\gm\otimes\gm; \hs{\hh}\big)$. It is well-known that the the set of all simple functions is dense in $L^2\big(\gm\otimes\gm; \hs{\hh}\big)$, thus there exists a sequence of simple kernels $\{\Theta_j\}_{j=1}^\infty$ converging to $\Theta$ in $L^2\big(\gm\otimes\gm; \hs{\hh}\big)$ as $n\to\infty$. Each of the kernels $\Theta_j$, $\in\N$, has the form $\Theta_j=\sum_{l=1}^{N_j}\chi_{\varOmega_l} S_l$ with 
$\{\varOmega_l\}_{l=1}^{N_j}\subseteq\borel{\C^n\times\C^n}$ and $\{S_l\}_{l=1}^{N_j}\subseteq\hs\hh$. Since for all $\varOmega\subseteq\borel{\C^n\times \C^n}$ and $S\in\hs{\hh}$ we have
\begin{align*}
\IGR{\Xi} F(z)=\int_{\C^n}\Xi(z,w)F(w)\D\gm(w)=S\int_{\C^n}\chi_{\varOmega}(z,w)F(w)\D\gm(w),\quad F\in\mathcal{B}\otimes \hh,      
\end{align*}
where $\Xi(z,w)=\chi_\varOmega(z,w) S$, we see that for every $j\in\N$, the operator $\IGR{\Theta_j}$ is Hilbert-Schmidt and $\|\IGR{\Theta_j}\|_{\hs\hh}^2=\sum_{l=1}^{N_j}\|S_l\|_{\hs{\hh}}^2(\gm\otimes\gm)(\varOmega_l)=\|\Theta_j\|_{L^2(\gm\otimes\gm; \hs{\hh})}^2$. This implies convergence of $\{\IGR{\Theta_j}\}_{j=1}^\infty$ in $\hs{\mathcal{B}\otimes\hh}$. On the other hand, for every $F\in\mathcal{B}\otimes \hh$ we have\allowdisplaybreaks
\begin{align*}
\|(\IGR{\Theta_j}-\IGR{\Theta}) F\|^2
&\leqslant\int_{\C^n} \big\| \int_{C_n}\big(\Theta_j(z,w)-\Theta_j(z,w)\big)F(w)\D\gm(w)\big\|^2\D\gm(z)\\
&\leqslant\int_{\C^n} \bigg( \int_{C_n}\|\Theta_j(z,w)-\Theta_j(z,w)\|\|F(w)\|\D\gm(w)\bigg)^2\D\gm(z)\\
&\leqslant \|F\|^2 \int_{\C^n} \int_{C_n}\hsn{\Theta_j(z,w)-\Theta_j(z,w)}^2\D\gm\otimes\gm(z,w)\\
&= \|F\|^2 \|\Theta_j-\Theta\|_{L^2(\gm\otimes\gm;\hs{\hh})}^2.
\end{align*}
Hence we deduce that $\{\IGR{\Theta_j}\}_{j=1}^\infty$ converges to $\IGR{\Theta}$ in $\hs\hh$, which completes the proof.
\end{proof}

As shown below, the adjoint of a Toeplitz operator can be recovered from a integral operator with a kernel that is associated to the symbol of the Toeplitz operator via translations. For that we recall a translation-related formula. Given $\lambda\in\C^n$, the translation $\tau_\lambda\colon \C^n\to \C^n$ is given by $\tau_\lambda(w)=w+\tau$, $w\in \C^N$. Using the change-of-variable formula, we get
\begin{align}\label{COV}
\int_{\C^n}(f\circ \tau_\lambda)(w) \D\gm(w)=\int_{\C^n} f(w) \widetilde k_\lambda(w)\D\gm
\end{align}
with 
\begin{align*}
\widetilde k_\lambda(w)=\frac{|k_\lambda(w)|^2}{k_\lambda(\lambda)},\quad w\in \C^n,
\end{align*}
holding for every Borel function $f\colon \C^n\to \REP$ or every $f\in L^2\big(\widetilde k\,\D\gm\big)$.

Here, and later on, given $\Phi\colon\C\to\ogr{\hh}$ and $g\in\hh$, we denote by $\Phi\otimes g$ the function $\C^n\to\hh$ defined by $(\Phi\otimes g)(z)=\Phi(z)g$.

Toeplitz operators act on functions $k_\lambda\otimes g$, with $\lambda\in\C^n$ and $g\in\hh$, in accordance to the following formula involving translations (cf. \cite[Proposition 1]{str-mmj-1992}). 
\begin{lem}\label{lem1}
Let $\lambda\in \C^n$ and $g\in\hh$ be such that $k_\lambda\otimes g\in \dom{\TP{\Phi}}$. Then
\begin{align*}
{(\TP{\Phi}k_\lambda\otimes g) (z)}={k_\lambda(z) \big(\PROJ( \Phi_\lambda\otimes g)\big)(\tau_{-\lambda }(z))}\quad z\in\C^n,
\end{align*}
where  $\Phi_\lambda=\Phi\circ\tau_\lambda$.
\end{lem}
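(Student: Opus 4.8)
The plan is to verify the identity weakly: fix $h\in\hh$ and $z\in\C^n$, compute $\is{(\TP{\Phi}(k_\lambda\otimes g))(z)}{h}$, and recognise it as $\is{k_\lambda(z)(\PROJ(\Phi_\lambda\otimes g))(\tau_{-\lambda}(z))}{h}$; since $h$ and $z$ are arbitrary this gives the asserted equality of $\hh$-valued functions. Applying \eqref{toeplitzform} to $F=k_\lambda\otimes g$ and using $(k_\lambda\otimes g)(w)=k_\lambda(w)g$ yields
\begin{align*}
\is{(\TP{\Phi}(k_\lambda\otimes g))(z)}{h}=\int_{\C^n}k_\lambda(w)\,\is{\Phi(w)g}{h}\,\overline{k_z(w)}\,\D\gm(w),
\end{align*}
where the integral converges absolutely because $\Phi(k_\lambda\otimes g)\in\SB{L}{\hh}$ (the domain hypothesis) and $k_z\in\mathcal{B}$, so that the Cauchy--Schwarz inequality applies.

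The main step is the change of variable $w=u+\lambda$. By translation invariance of the Lebesgue measure (cf.\ \eqref{COV}), writing $\gm$, $k_\lambda$ and $k_z$ out explicitly one checks that
\begin{align*}
k_\lambda(u+\lambda)\,\overline{k_z(u+\lambda)}\,\D\gm(u+\lambda)=k_\lambda(z)\,\overline{k_{z-\lambda}(u)}\,\D\gm(u);
\end{align*}
indeed, the Gaussian factor $e^{-|u+\lambda|^2}$ supplies precisely the cross terms $e^{-\is{u}{\lambda}-\is{\lambda}{u}}$ and the constant $e^{-|\lambda|^2}$ that cancel the identity $k_\lambda(u+\lambda)=e^{|\lambda|^2}k_\lambda(u)$ and turn $\overline{k_z(u+\lambda)}=k_\lambda(z)\overline{k_z(u)}$ into $k_\lambda(z)\overline{k_{z-\lambda}(u)}$. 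Combined with $\Phi(u+\lambda)=\Phi_\lambda(u)$, the displayed integral becomes
\begin{align*}
k_\lambda(z)\int_{\C^n}\is{\Phi_\lambda(u)g}{h}\,\overline{k_{z-\lambda}(u)}\,\D\gm(u).
\end{align*}
The same substitution applied to $\int_{\C^n}\|k_\lambda(w)\Phi(w)g\|^2\D\gm(w)$ shows that it equals $e^{|\lambda|^2}\int_{\C^n}\|\Phi_\lambda(u)g\|^2\D\gm(u)$, so the domain hypothesis forces $\Phi_\lambda\otimes g\in\SB{L}{\hh}$ and $\PROJ(\Phi_\lambda\otimes g)$ is well defined.

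It remains to identify the last integral. Since $\is{\Phi_\lambda(u)g}{h}=\is{(\Phi_\lambda\otimes g)(u)}{h}$ and $k_{z-\lambda}\otimes h\in\mathcal{B}\otimes\hh$, it equals $\is{\Phi_\lambda\otimes g}{k_{z-\lambda}\otimes h}=\is{\PROJ(\Phi_\lambda\otimes g)}{k_{z-\lambda}\otimes h}$, which by the reproducing property \eqref{reprod} is $\is{(\PROJ(\Phi_\lambda\otimes g))(z-\lambda)}{h}$. Therefore $\is{(\TP{\Phi}(k_\lambda\otimes g))(z)}{h}=k_\lambda(z)\,\is{(\PROJ(\Phi_\lambda\otimes g))(z-\lambda)}{h}$ for every $h\in\hh$, and, as $\tau_{-\lambda}(z)=z-\lambda$, the lemma follows. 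I expect the only delicate point to be the bookkeeping in the change of variable --- keeping track of which exponentials carry $\lambda$, $z$, or $u$ and matching them against the Gaussian weight --- together with the minor verification that the translated symbol remains square-integrable so that the right-hand side is meaningful; everything else is a routine application of \eqref{toeplitzform} and \eqref{reprod}.
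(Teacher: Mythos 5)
Your proof is correct and follows essentially the same route as the paper's: a weak computation starting from \eqref{toeplitzform}, a translation change of variables (which you carry out by explicit Gaussian bookkeeping, whereas the paper packages it as \eqref{COV} together with the identity $k_\lambda(u)\overline{k_z(u)}=\widetilde k_\lambda(u)k_\lambda(z)\overline{k_{\tau_{-\lambda}(z)}(\tau_{-\lambda}(u))}$), the same verification that $\Phi_\lambda\otimes g\in\SB{L}{\hh}$ from the domain hypothesis, and the reproducing property \eqref{reprod} to identify the resulting integral as $\is{(\PROJ(\Phi_\lambda\otimes g))(\tau_{-\lambda}(z))}{h}$. No gaps; the differences are only in how the change-of-variable step is organized.
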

\begin{proof}
Fix $z\in\C^n,\ h\in \hh$. Since $k_\lambda\otimes g\in \dom{\TP{\Phi}}$, we see that \eqref{COV} implies \allowdisplaybreaks
\begin{align*}
\int_{\C^n}\|\Phi_\lambda(u)g\|^2\D\gm(u)
&=\int_{\C^n}\|\Phi(u)g\|^2\widetilde k_\lambda(u)\D\gm(u)\\ &=\frac{1}{k_\lambda(\lambda)}\int_{\C^n}\|\Phi(u)g\|^2|k_\lambda(u)|^2\D\gm(u)\\
&=\frac{1}{k_\lambda(\lambda)}\int_{\C^n}\|\Phi(u)k_\lambda(u)g\|^2\D\gm(u)\\
&=\frac{1}{k_\lambda(\lambda)}\int_{\C^n}\|\Phi(u)(k_\lambda\otimes g)(u)\|^2\D\gm(u)<\infty,
\end{align*}
which means that $u\mapsto \Phi_\lambda(u)g$ belongs to $\SB{L}{\hh}$. 
Now, it is easily seen that (cf. \cite{str-mmj-1992})
\begin{align*}
k_\lambda(u)\overline{k_z(u)}=\widetilde k_\lambda(u) k_\lambda(z) \overline{k_{\tau_{-\lambda}(z)}(\tau_{-\lambda}(u))},\quad u\in \C^n. 
\end{align*}
This together with \eqref{toeplitzform}, \eqref{COV}, and \eqref{reprod} gives
\begin{align*}
\is{(\TP{\Phi}k_\lambda\otimes g) (z)}{h}
&=\int_{\C^n}\is{\Phi(u)g}{h}k_\lambda(u)\overline{k_z(u)}\D\gm(u)\\
&=\int_{\C^n}\is{\Phi(u)g}{h}\widetilde k_\lambda(u) k_\lambda(z) \overline{k_{\tau_{-\lambda}(z)}(\tau_{-\lambda}(u))}\D\gm(u)\\
&=k_\lambda(z) \int_{\C^n}\is{\Phi_\lambda(u)g}{h}  \overline{k_{\tau_{-\lambda}(z)}(u)}\D\gm(u)\\
&=k_\lambda(z) \int_{\C^n}\is{\Phi_\lambda(u)g}{(k_{\tau_{-\lambda}(z)}\otimes h) (u)} \D\gm(u)\\
&=k_\lambda(z) \is{\big(\PROJ(\Phi_\lambda\otimes g)\big)( \tau_{-\lambda}(z))}{h},
\end{align*}
which completes the proof.
\end{proof}
Having the above result, we can show that the adjoint of a vectorial Toeplitz operator is a suboperator of a vectorial integral operator with translation-related kernel. For a general studies of the adjoints of vectorial Toeplitz operators we refer the reader to \cite{cic-sm-2002, cic-sha-ms-2003}
\begin{pro}\label{lem2}
Suppose $\KHH\subseteq\dom{\TP{\Phi}}$. Then $\dom{\TP{\Phi}^*}\subseteq \dom{\IGR{\Theta_\Phi}}$ and
\begin{align*}
\TP{\Phi}^*F =\IGR{\Theta_\Phi} F,\quad F\in\dom{\TP{\Phi}^*},
\end{align*}
where  $\Theta_\Phi\colon \C^n\times\C^n\to\ogr{\hh}$ is the kernel given by
\begin{align*}
\Theta_\Phi (z,w)^*g={k_z(w)} \big(\PROJ (\Phi_z\otimes g)\big)(\tau_{-z}(w)),\quad g\in\hh,\ z,w\in\C^n.
\end{align*}.
\end{pro}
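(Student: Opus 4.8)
The plan is to establish the asserted operator identity at the level of pointwise values, combining Lemma \ref{lem1}, which describes $\TP{\Phi}$ on the functions $k_\lambda\otimes g$, with the defining relation of the adjoint and the reproducing property \eqref{reprod}. First I would note that $\TP{\Phi}$ is densely defined: since a function in $\mathcal{B}$ orthogonal to every $k_\lambda$ vanishes identically, $\{k_\lambda\colon\lambda\in\C^n\}$ spans a dense subspace of $\mathcal{B}$, so $\KHH$ is dense in $\mathcal{B}\otimes\hh$, and the hypothesis $\KHH\subseteq\dom{\TP{\Phi}}$ then guarantees that $\TP{\Phi}^*$ exists; note also that $\TP{\Phi}^*$ maps $\dom{\TP{\Phi}^*}$ into $\mathcal{B}\otimes\hh$.

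Before the main computation one should check that $\Theta_\Phi$ is a genuine $\ogr{\hh}$-valued Borel kernel. For fixed $z$ the assignment $g\mapsto\Phi_z\otimes g$ is defined on all of $\hh$ --- this is exactly the $\SB{L}{\hh}$-membership verified inside the proof of Lemma \ref{lem1}, using $k_z\otimes g\in\KHH\subseteq\dom{\TP{\Phi}}$ --- and it is closed (an $L^2$-limit is identified by passing to a pointwise-convergent subsequence and using continuity of $\Phi_z(u)$), hence bounded by the closed graph theorem. Composing with $\PROJ$ and with the evaluation functional $G\mapsto G(\tau_{-z}(w))$ on $\mathcal{B}\otimes\hh$, whose norm is at most $\|k_{\tau_{-z}(w)}\|=\exp(|w-z|^2/2)$ by \eqref{reprod}, shows that $g\mapsto\Theta_\Phi(z,w)^*g$ is a bounded operator; one then lets $\Theta_\Phi(z,w)$ be its adjoint, which lies in $\ogr{\hh}$. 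Borel measurability of $(z,w)\mapsto\Theta_\Phi(z,w)$ follows after expanding $\is{\Theta_\Phi(z,w)^*g}{g'}$ as an absolutely convergent integral against $\overline{k_{\tau_{-z}(w)}}$.

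For the core argument, fix $F\in\dom{\TP{\Phi}^*}$, $\lambda\in\C^n$ and $g\in\hh$. Since $k_\lambda\otimes g\in\KHH\subseteq\dom{\TP{\Phi}}$, Lemma \ref{lem1} and the definition of $\Theta_\Phi$ yield $(\TP{\Phi}(k_\lambda\otimes g))(z)=\Theta_\Phi(\lambda,z)^*g$ for every $z\in\C^n$. Hence, using \eqref{reprod} and the definition of the adjoint,
\begin{align*}
\is{(\TP{\Phi}^*F)(\lambda)}{g}&=\is{\TP{\Phi}^*F}{k_\lambda\otimes g}=\is{F}{\TP{\Phi}(k_\lambda\otimes g)}\\
&=\int_{\C^n}\is{F(z)}{\Theta_\Phi(\lambda,z)^*g}\,\D\gm(z)=\int_{\C^n}\is{\Theta_\Phi(\lambda,z)F(z)}{g}\,\D\gm(z),
\end{align*}
the integral being absolutely convergent by the Cauchy--Schwarz inequality since both $F$ and $\TP{\Phi}(k_\lambda\otimes g)$ lie in $\SB{L}{\hh}$. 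The last expression is by definition $\is{(J_{\Theta_\Phi}F)(\lambda)}{g}$; as the first expression is a bounded conjugate-linear functional of $g$, this simultaneously shows that the weak integral defining $(J_{\Theta_\Phi}F)(\lambda)$ exists and equals $(\TP{\Phi}^*F)(\lambda)$.

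Finally, $\lambda$ being arbitrary, $J_{\Theta_\Phi}F=\TP{\Phi}^*F$ as $\hh$-valued functions on $\C^n$, and the latter lies in $\mathcal{B}\otimes\hh\subseteq\SB{L}{\hh}$; hence $F\in\dom{\IGR{\Theta_\Phi}}$ and $\IGR{\Theta_\Phi}F=\PROJ(J_{\Theta_\Phi}F)=\PROJ(\TP{\Phi}^*F)=\TP{\Phi}^*F$, since $\PROJ$ restricts to the identity on $\mathcal{B}\otimes\hh$. I expect the step demanding the most care to be this bookkeeping around the weak-sense integral and the domains: a priori $J_{\Theta_\Phi}F$ need not even be pointwise defined, and the chain of equalities is arranged precisely so that the pointwise existence of $(J_{\Theta_\Phi}F)(\lambda)$, its value, and the membership $F\in\dom{\IGR{\Theta_\Phi}}$ all drop out at once; verifying that $\Theta_\Phi$ is a legitimate bounded-operator-valued Borel kernel is the other, more routine, ingredient.
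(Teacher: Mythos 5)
Your proposal is correct and follows essentially the same route as the paper: identify $\Theta_\Phi(\lambda,\cdot)^*g$ with $\TP{\Phi}(k_\lambda\otimes g)$ via Lemma \ref{lem1}, then combine the adjoint relation with the reproducing property \eqref{reprod} to get $(J_{\Theta_\Phi}F)(\lambda)=(\TP{\Phi}^*F)(\lambda)$ pointwise, from which the domain inclusion and the equality $\IGR{\Theta_\Phi}F=\TP{\Phi}^*F$ follow. Your additional checks (density of $\KHH$, boundedness and measurability of the kernel $\Theta_\Phi$) are sound supplements that the paper leaves implicit, not a different argument.
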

\begin{proof}
Let $F\in\dom{\TP{\Phi}^*}$. Then, by \eqref{toeplitzform} and Lemma \ref{lem1} we have\allowdisplaybreaks
\begin{align*}
\int_{\C^n}\is{\Theta_{\Phi}(z,w)F(w)}{g}\D\gm(w)
&=\int_{\C^n}\is{F(w)}{\Theta_{\Phi}(z,w)^* g}\D\gm(w)\\
&=\int_{\C^n}\is{F(w)}{(\TP{\Phi} k_z\otimes g)(w)}\D\gm(w)\\
&=\is{\TP{\Phi}^*F}{k_z\otimes g},\quad g\in\hh, z\in\C^n, 
\end{align*}
which means that $J_\Theta F\colon \C^n\to\hh$ is well-defined (the integral exists in the weak sense). Also, by the above computations, we have
\begin{align*}
\is{(J_{\Theta_\Phi} F) (z)}{g}=\is{(\TP{\Phi}^*F)(z)}{g},\quad g\in \hh, z\in\C^n.
\end{align*}
This implies that $(J_{\Theta_\Phi} F) (z)=(\TP{\Phi}^*F)(z)$ for every $z\in \C^n$. As a consequence, $J_{\Theta_\Phi} F\in \SB{L}{\hh}$ and $\IGR{\Theta_\Phi} F=\TP{\Phi}^*F$, which completes the proof. 
\end{proof}
\section{Main results}
Generalizing Stroethoff's result we use various kernels of related to the formula for the adjoint of a Toeplitz operator that was presented in Proposition \ref{lem2}. The following lemma provides a sufficient condition for one of those kernels to be $\hs{\hh}$-valued.
\begin{lem}\label{inths2combo}
Suppose $\KHH\subseteq\dom{\TP{\Phi}}$ and $z,w\in\C^n$. Let $\Xi_\Phi(z,w)\in\ogr{\hh}$ be given by
\begin{align*}
\Xi_\Phi(z,w) g&=\int_{\C^n} \Phi_z(u) g\,\overline{k_w(u)}\, \D\gm(u)=\PROJ(\Phi_z\otimes g)(w),\quad g\in\hh.
\end{align*}
Then the following conditions are satisfied$:$
\begin{itemize}
\item[(i)] $\hsn{\Xi_\Phi(z,w)}^2\leqslant k_w(w) \int_{\C^n}  \hsn{\Phi_z(u)}^2\D\gm(u)$,
\item[(ii)] if $\Phi$ is analytic, then $\hsn{\Xi_\Phi(z,w)}= \hsn{\Phi_z(w)}$. 
\end{itemize}
\end{lem}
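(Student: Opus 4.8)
The plan is to evaluate the Hilbert--Schmidt norm against a fixed orthonormal basis $\{e_k\}_{k\in\N}$ of $\hh$, writing $\hsn{\Xi_\Phi(z,w)}^2=\sum_k\|\Xi_\Phi(z,w)e_k\|^2$, and to reduce both items to the reproducing property \eqref{reprod}. First I would observe that, exactly as in the proof of Lemma \ref{lem1}, the hypothesis $\KHH\subseteq\dom{\TP{\Phi}}$ applied to $k_z\otimes e_k$ forces the function $u\mapsto\Phi_z(u)e_k$ to lie in $\SB{L}{\hh}$; hence $\PROJ(\Phi_z\otimes e_k)\in\mathcal{B}\otimes\hh$ and $\Xi_\Phi(z,w)e_k=\bigl(\PROJ(\Phi_z\otimes e_k)\bigr)(w)$ is a bona fide element of $\hh$.

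For part (i), I would apply \eqref{reprod} to $F_k:=\PROJ(\Phi_z\otimes e_k)$: since $\is{F_k(w)}{h}=\is{F_k}{k_w\otimes h}$ for all $h\in\hh$ and $\|k_w\otimes h\|^2=k_w(w)\|h\|^2$, duality together with the Cauchy--Schwarz inequality gives $\|\Xi_\Phi(z,w)e_k\|^2=\|F_k(w)\|^2\leqslant k_w(w)\|F_k\|^2$. As $\PROJ$ is a contraction, $\|F_k\|^2\leqslant\|\Phi_z\otimes e_k\|^2=\int_{\C^n}\|\Phi_z(u)e_k\|^2\D\gm(u)$. Summing over $k$ and interchanging the sum with the integral (legitimate by Tonelli, all terms being nonnegative), and using $\sum_k\|\Phi_z(u)e_k\|^2=\hsn{\Phi_z(u)}^2$, I obtain the claimed estimate; if the right-hand side is infinite there is nothing to prove.

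For part (ii), if $\Phi$ is analytic then so is $\Phi_z=\Phi\circ\tau_z$, and consequently each map $u\mapsto\Phi_z(u)e_k$ is an analytic $\hh$-valued function which, by the observation above, also belongs to $\SB{L}{\hh}$; therefore $\Phi_z\otimes e_k\in\mathcal{B}\otimes\hh$ and $\PROJ$ fixes it. Thus $\Xi_\Phi(z,w)e_k=(\Phi_z\otimes e_k)(w)=\Phi_z(w)e_k$ for every $k$, so $\Xi_\Phi(z,w)$ and $\Phi_z(w)$ agree on the basis, whence $\hsn{\Xi_\Phi(z,w)}^2=\sum_k\|\Phi_z(w)e_k\|^2=\hsn{\Phi_z(w)}^2$, the convention for non-Hilbert--Schmidt operators covering the case when this is infinite.

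The argument is essentially routine; the only points needing care are invoking the $L^2$-membership of $u\mapsto\Phi_z(u)e_k$ (which rides on the computation already carried out in Lemma \ref{lem1} rather than on anything new), the Tonelli interchange, and---for (ii)---the standard fact that an analytic $\hh$-valued function in $\SB{L}{\hh}$ lies in the vectorial Segal--Bargmann space, so that $\PROJ$ acts there as the identity. I do not anticipate any real obstacle.
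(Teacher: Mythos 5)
Your proof is correct and follows essentially the same route as the paper's: both parts reduce to Cauchy--Schwarz against $k_w$ (your evaluation bound $\|F(w)\|\leqslant k_w(w)^{1/2}\|F\|$ combined with contractivity of $\PROJ$ is the same inequality) and the reproducing property, summed over an orthonormal basis via Tonelli, with the membership $\Phi_z\otimes g\in\SB{L}{\hh}$ justified exactly as in Lemma \ref{lem1}. The only cosmetic difference is that you sum column norms $\|\Xi_\Phi(z,w)e_k\|^2$ where the paper sums matrix entries $|\is{\Xi_\Phi(z,w)g_i}{g_j}|^2$, and in (ii) you note that $\PROJ$ fixes the analytic function $\Phi_z\otimes e_k$, which yields the (slightly stronger, but equivalent for the purpose at hand) operator identity $\Xi_\Phi(z,w)=\Phi_z(w)$ rather than only equality of the Hilbert--Schmidt norms.
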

\begin{proof}
Let $\{g_i\}_{i=1}^\infty$ an orthonormal basis for $\hh$.

(i) Assume that $\int_{\C^n}  \hsn{\Phi_z(u)}^2\D\gm(u)<\infty$. Then for $\gm$-a.e. $u\in \C^n$, $\Phi_z(u)$ is Hilbert-Schmidt. Then we get\allowdisplaybreaks
\begin{align*}
\sum_{i,j=1}^\infty |\is{\Xi_\Phi(z,w)g_i}{g_j}|^2
&=\sum_{i,j=1}^\infty \Big|\int_{\C^n}\is{\Phi_z(u)g_i}{g_j}\overline{k_w(u)}\D\gm(u)\Big|^2\\
&\leqslant \sum_{i,j=1}^\infty \bigg(\int_{\C^n}|\is{\Phi_z(u)g_i}{g_j}|^2\D\gm(u)\bigg)\bigg(\int_{\C^n}|k_w(u)|^2\D\gm(u)\bigg)\\
&= \|k_w\|^2\int_{\C^n}  \Big(\sum_{i,j=1}^\infty |\is{\Phi_z(u)g_i}{g_j}|^2\Big)\D\gm(u)\\
&=k_w(w)\int_{\C^n}\hsn{\Phi_z(u)}^2\D\gm(u),
\end{align*}
which gives (i).

(ii) Due to the reproducing property we have
\begin{align*}
\is{\PROJ\big(\Phi_z\otimes g_i\big)(w)}{g_j}&=\int_{\C^n} \is{\Phi_z(u)g_i}{g_j}\overline{k_w(u)}\, \D\gm(u)=\is{\Phi_z(w)g_i}{g_j},\quad i,j\in\N.
\end{align*}
Therefore we get\allowdisplaybreaks
\begin{align*}
\sum_{i,j=1}^\infty |\is{\Xi_\Phi(z,w)g_i}{g_j}|^2
&=\sum_{i,j=1}^\infty |\is{\Phi_z(w)g_i}{g_j}|^2,
\end{align*}
which proves (ii) and completes the proof.
\end{proof}
Employing Lemma \ref{inths} we get.
\begin{cor}
Suppose $\KHH\subseteq\dom{\TP{\Phi}}$. Suppose that $\Phi$ is analytic and kernel $(z,w)\mapsto\Phi(z+w)$ is Hilbert-Schmidt. Then the integral operator $\IGR{\Xi_\Phi}$ with  $\Xi_\Phi$ as in Lemma \ref{inths2combo} is Hilbert-Schmidt on $\mathcal{B}\otimes \hh$.
\end{cor}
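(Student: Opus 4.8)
The plan is to reduce everything to Lemma~\ref{inths} by checking that $\Xi_\Phi$ is a Hilbert-Schmidt kernel in the sense of \eqref{hskernel}. First I would invoke the standing hypothesis $\KHH\subseteq\dom{\TP{\Phi}}$, so that Lemma~\ref{inths2combo} applies and $\Xi_\Phi(z,w)\in\ogr{\hh}$ for all $z,w\in\C^n$; moreover, since $\Phi$ is analytic, each function $\Phi_z\otimes g$ is analytic, hence $\PROJ(\Phi_z\otimes g)=\Phi_z\otimes g$, and part (ii) of Lemma~\ref{inths2combo} gives the pointwise identity
\begin{align*}
\hsn{\Xi_\Phi(z,w)}=\hsn{\Phi_z(w)}=\hsn{\Phi(z+w)},\qquad z,w\in\C^n,
\end{align*}
where the last equality is merely the definition $\Phi_z=\Phi\circ\tau_z$ together with $\tau_z(w)=w+z$.

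Next I would integrate this identity over $\C^n\times\C^n$ with respect to $\gm\otimes\gm$, obtaining
\begin{align*}
\int_{\C^n}\int_{\C^n}\hsn{\Xi_\Phi(z,w)}^2\D\gm(z)\D\gm(w)=\int_{\C^n}\int_{\C^n}\hsn{\Phi(z+w)}^2\D\gm(z)\D\gm(w),
\end{align*}
and the right-hand side is finite precisely because the kernel $(z,w)\mapsto\Phi(z+w)$ is assumed to be Hilbert-Schmidt, i.e.\ satisfies \eqref{hskernel}. Hence $\Xi_\Phi$ satisfies \eqref{hskernel} and is therefore a Hilbert-Schmidt $\ogr{\hh}$-valued kernel on $\C^n$.

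Finally, applying Lemma~\ref{inths} with $\Theta=\Xi_\Phi$ yields that $\IGR{\Xi_\Phi}$ is a Hilbert-Schmidt operator on $\mathcal{B}\otimes\hh$, which is the assertion. I do not anticipate a real obstacle: the only points needing a little care are verifying that the hypotheses of Lemma~\ref{inths2combo}(ii) are genuinely in force (concretely, that analyticity of $\Phi$ lets one replace $\PROJ(\Phi_z\otimes g)$ by $\Phi_z\otimes g$, thereby identifying $\hsn{\Xi_\Phi(z,w)}$ with $\hsn{\Phi_z(w)}$) and, as a measure-theoretic triviality, that $(z,w)\mapsto\Xi_\Phi(z,w)$ is Borel measurable as a map into $\hs{\hh}$, which again follows from analyticity of $\Phi$.
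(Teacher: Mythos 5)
Your argument is correct and is exactly the route the paper intends (the corollary is stated with the remark ``Employing Lemma \ref{inths} we get''): Lemma \ref{inths2combo}\,(ii) gives $\hsn{\Xi_\Phi(z,w)}=\hsn{\Phi_z(w)}=\hsn{\Phi(z+w)}$, so the hypothesis that $(z,w)\mapsto\Phi(z+w)$ satisfies \eqref{hskernel} transfers verbatim to $\Xi_\Phi$, and Lemma \ref{inths} finishes the proof. Nothing essential differs from the paper's (implicit) proof.
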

As shown below, the $\ogr{\hh}$-valued kernel of the adjoint of a Toeplitz operator multiplied by appropriate complex function induces a Hilbert-Schmidt integral operator.
\begin{pro}\label{inths4new}
Let $\KHH\subseteq\dom{\TP{\Phi}}$ and  $\Xi_\Phi\colon\C^n\times\C^n\to\ogr{\hh}$ be as in Lemma \ref{inths2combo}. Suppose that $\gamma\colon\C^n\times \C^n\to \C$ is Borel and the function $\alpha\colon\C^n\to\overline{\R}_+$ given by
\begin{align*}
\alpha(z)=\int_{\C^n} |\gamma(z,w)|^2\hsn{\Xi_\Phi(z,w)}^2\D\gm(w), \quad z\in\C^n,
\end{align*}
belongs to $L^1(V)$. Then the integral operator $\IGR{\Theta_{\gamma,\Phi}}$ with  $\Theta_{\gamma,\Phi}\colon\C^n\times \C^n\to\ogr{\hh}$ given by
\begin{align*}
\Theta_{\gamma,\Phi}(x,y)^*g=\gamma(x,\tau_{-x}(y))\,{k_x(y)}\,\big(\PROJ \Phi_x\otimes g\big)(\tau_{-x}(y)),\quad g\in\hh,\ x,y\in\C^n,
\end{align*}
is Hilbert-Schmidt on $\mathcal{B}\otimes \hh$. 
\end{pro}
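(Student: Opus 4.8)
The plan is to derive the statement from Lemma~\ref{inths}: it suffices to show that $\Theta_{\gamma,\Phi}$ is a Hilbert-Schmidt kernel, i.e.\ that it satisfies~\eqref{hskernel}, and then the conclusion about $\IGR{\Theta_{\gamma,\Phi}}$ is immediate. Everything will come down to one change-of-variable computation, the same one underlying~\eqref{COV}.

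First I would express $\Theta_{\gamma,\Phi}$ in terms of $\Xi_\Phi$. By the definition of $\Xi_\Phi$ in Lemma~\ref{inths2combo}, $\big(\PROJ(\Phi_x\otimes g)\big)(w)=\Xi_\Phi(x,w)g$ for all $g\in\hh$, so taking $w=\tau_{-x}(y)$ yields
\begin{align*}
\Theta_{\gamma,\Phi}(x,y)^*=\gamma\big(x,\tau_{-x}(y)\big)\,k_x(y)\,\Xi_\Phi\big(x,\tau_{-x}(y)\big),\qquad x,y\in\C^n.
\end{align*}
This shows in passing that $\Theta_{\gamma,\Phi}(x,y)\in\ogr{\hh}$ is well defined (the prefactor is a finite scalar and $\Xi_\Phi(x,w)\in\ogr{\hh}$) and that $(x,y)\mapsto\Theta_{\gamma,\Phi}(x,y)$ has the measurability needed to apply Lemma~\ref{inths}. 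Since the Hilbert-Schmidt norm is unchanged under taking adjoints and is absolutely homogeneous,
\begin{align*}
\hsn{\Theta_{\gamma,\Phi}(x,y)}^2=\big|\gamma\big(x,\tau_{-x}(y)\big)\big|^2\,|k_x(y)|^2\,\hsn{\Xi_\Phi\big(x,\tau_{-x}(y)\big)}^2.
\end{align*}

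Next I would integrate this over $\C^n\times\C^n$ against $\gm\otimes\gm$; since the integrand is nonnegative, Tonelli's theorem lets me do the $y$-integral first, with $x$ fixed. The crucial identity is that for any nonnegative Borel $f$ on $\C^n$,
\begin{align*}
\int_{\C^n} f\big(\tau_{-x}(y)\big)\,|k_x(y)|^2\D\gm(y)=k_x(x)\int_{\C^n} f(w)\D\gm(w),
\end{align*}
which follows by writing $\D\gm(y)=\pi^{-n}e^{-|y|^2}\D V(y)$, completing the square in $e^{-|y|^2}|k_x(y)|^2=e^{|x|^2}e^{-|y-x|^2}$, and substituting $w=\tau_{-x}(y)=y-x$ (alternatively one may read it off from~\eqref{COV}). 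Applying this with $f(w)=|\gamma(x,w)|^2\hsn{\Xi_\Phi(x,w)}^2$ turns the inner integral into $k_x(x)\,\alpha(x)$, and therefore
\begin{align*}
\int_{\C^n}\int_{\C^n}\hsn{\Theta_{\gamma,\Phi}(x,y)}^2\D\gm(y)\D\gm(x)=\int_{\C^n}k_x(x)\,\alpha(x)\D\gm(x)=\frac{1}{\pi^n}\int_{\C^n}\alpha(x)\D V(x),
\end{align*}
which is finite by the hypothesis $\alpha\in L^1(V)$. So $\Theta_{\gamma,\Phi}$ satisfies~\eqref{hskernel}, and Lemma~\ref{inths} gives that $\IGR{\Theta_{\gamma,\Phi}}$ is Hilbert-Schmidt on $\mathcal B\otimes\hh$.

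The calculation is otherwise routine; the one delicate point is the bookkeeping in this change of variable, where the weight $|k_x(y)|^2$ has to absorb the Gaussian shift exactly so as to leave the factor $k_x(x)=e^{|x|^2}$, which then cancels the Gaussian in $\D\gm(x)$ and leaves plain Lebesgue measure --- precisely why the hypothesis is stated with $L^1(V)$ instead of $L^1(\gm)$. The only other thing to check carefully is the strong ($\hs{\hh}$-valued) measurability of $\Theta_{\gamma,\Phi}$ demanded by Lemma~\ref{inths}, which I would get from the displayed factorization together with the measurability properties of $\Xi_\Phi$ recorded in Lemma~\ref{inths2combo}.
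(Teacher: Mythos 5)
Your argument is correct and is essentially the paper's own proof: both reduce the claim to Lemma \ref{inths} by checking \eqref{hskernel} for $\Theta_{\gamma,\Phi}$, using $\hsn{\Theta_{\gamma,\Phi}(x,y)}=\hsn{\Theta_{\gamma,\Phi}(x,y)^*}=|\gamma(x,\tau_{-x}(y))|\,|k_x(y)|\,\hsn{\Xi_\Phi(x,\tau_{-x}(y))}$ and the translation change-of-variable \eqref{COV} (via Fubini/Tonelli) to turn the double integral into $\int_{\C^n}k_x(x)\alpha(x)\D\gm(x)=\pi^{-n}\int_{\C^n}\alpha\,\D V<\infty$. No substantive difference from the paper's argument.
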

\begin{proof}
According to Lemma \ref{inths} the proof amounts to showing that the kernel $\Theta_{\gamma,\Phi}$ is Hilbert-Schmidt. Since, by the Fubini's theorem and \eqref{COV}, we have
\begin{align*}
\int_{\C^{2n}} \hsn{\gamma(z,\tau_{-z}(w))k_z(w)&\Xi_\Phi(z,\tau_{-z}(w))}^2\D\gm(z,v)\\
&=\int_{\C^{2n}} |\gamma(z,w)k_z(\tau_z(w))|^2\widetilde k_{-z}(w)\hsn{\Xi_\Phi(z,w)}^2\D\gm(z,w)\\
&=\int_{\C^n} k_z(z)\bigg( \int_{\C^n} |\gamma(z,w)|^2\hsn{\Xi_\Phi(z,w)}^2\D\gm(w)\bigg)\D\gm(z),
\end{align*}
the claim follows.
\end{proof}
\begin{cor}
Let $\Phi\in L^2\big(\gm; \ogr{\hh}\big)$. Let $\varDelta\subseteq\C^n$ be a bounded Borel set. If 
\begin{align*}
\int_\varDelta\int_{\C^n} \hsn{\Phi_z(u)}^2\D\gm(u)\D V(z)<\infty,
\end{align*}
then the integral operator $\IGR{\Theta_{\chi_{\varDelta\times\varDelta},\Phi}}$, given as in Proposition \ref{inths4new}, is Hilbert-Schmidt on $\mathcal{B}\otimes \hh$. 
\end{cor}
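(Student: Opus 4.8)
The plan is to obtain the corollary as a direct specialization of Proposition \ref{inths4new}, taking $\gamma=\chi_{\varDelta\times\varDelta}$. For this choice the kernel $\Theta_{\gamma,\Phi}$ of that proposition is exactly the kernel $\Theta_{\chi_{\varDelta\times\varDelta},\Phi}$ in the statement, so by Proposition \ref{inths4new} everything reduces to verifying that the function
\begin{align*}
\alpha(z)=\int_{\C^n}|\chi_{\varDelta\times\varDelta}(z,w)|^2\hsn{\Xi_\Phi(z,w)}^2\D\gm(w)=\chi_\varDelta(z)\int_\varDelta\hsn{\Xi_\Phi(z,w)}^2\D\gm(w),\quad z\in\C^n,
\end{align*}
belongs to $L^1(V)$, where $\Xi_\Phi$ is the kernel from Lemma \ref{inths2combo} and the standing hypothesis $\KHH\subseteq\dom{\TP{\Phi}}$, under which $\Xi_\Phi$ is defined, is in force.

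To estimate $\alpha$ I would invoke part (i) of Lemma \ref{inths2combo}, namely $\hsn{\Xi_\Phi(z,w)}^2\leqslant k_w(w)\int_{\C^n}\hsn{\Phi_z(u)}^2\D\gm(u)$. Since $\varDelta$ is bounded, choose $R\in(0,\infty)$ with $\varDelta\subseteq\ball{R}$; then $k_w(w)=\exp(|w|^2)\leqslant\exp(R^2)$ for $w\in\varDelta$, while $\gm(\varDelta)\leqslant\gm(\C^n)=1$. Hence
\begin{align*}
\alpha(z)\leqslant\chi_\varDelta(z)\,\exp(R^2)\,\gm(\varDelta)\int_{\C^n}\hsn{\Phi_z(u)}^2\D\gm(u),\quad z\in\C^n,
\end{align*}
and, integrating in $z$ (using Tonelli's theorem to justify the manipulation of the nonnegative integrand),
\begin{align*}
\int_{\C^n}\alpha(z)\D V(z)\leqslant\exp(R^2)\,\gm(\varDelta)\int_\varDelta\int_{\C^n}\hsn{\Phi_z(u)}^2\D\gm(u)\D V(z)<\infty
\end{align*}
by the hypothesis of the corollary. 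Thus $\alpha\in L^1(V)$, and Proposition \ref{inths4new} yields that $\IGR{\Theta_{\chi_{\varDelta\times\varDelta},\Phi}}$ is Hilbert-Schmidt on $\mathcal{B}\otimes\hh$.

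I do not expect any real obstacle: the argument is just a verification of the hypothesis of Proposition \ref{inths4new}, and the only inputs are the pointwise Hilbert-Schmidt bound of Lemma \ref{inths2combo}(i), the boundedness of $\varDelta$ (which converts the Gaussian weight $k_w(w)$ into a uniform constant on $\varDelta$ and makes $\gm(\varDelta)$ finite), and Tonelli's theorem. The only point deserving a word of care is that the earlier results be applicable at all, i.e.\ that $\KHH\subseteq\dom{\TP{\Phi}}$ and that $\Xi_\Phi$, equivalently $\PROJ(\Phi_z\otimes g)$, be well defined; this is the role played by the assumption $\Phi\in L^2(\gm;\ogr{\hh})$ together with the standing setup of this section.
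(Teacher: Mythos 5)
Your proposal is correct and follows essentially the same route as the paper: specialize Proposition \ref{inths4new} to $\gamma=\chi_{\varDelta\times\varDelta}$ and verify $\alpha\in L^1(V)$ via Lemma \ref{inths2combo}\,(i) together with the boundedness of $\varDelta$. The only cosmetic difference is that you bound $k_w(w)\leqslant\exp(R^2)$ on $\varDelta$, whereas the paper integrates $k_w(w)\,\D\gm(w)$ over $\varDelta$ exactly, obtaining the factor $V(\varDelta)$ (up to $\pi^n$); both yield the same conclusion.
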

\begin{proof}
In view of Proposition \ref{inths4new}, $\IGR{\Theta_{\chi_{\varDelta\times\varDelta},\Phi}}$ is Hilbert-Schmidt whenever
\begin{align}\label{pdc}
\int_{\C^n}\int_{\C^n}\chi_{\varDelta\times\varDelta}(z,w)\hsn{\Xi_\Phi(z,w)}^2\D\gm(w)\D V(z)<\infty.
\end{align}
Since, by Lemma \ref{inths2combo}\,(i), we have
\begin{align*}
\int_{\C^n}\int_{\C^n}\chi_{\varDelta\times\varDelta}(z,w)\hsn{\Xi_\Phi(z,w)}^2\D\gm(w)\D V(z)
\leqslant V(\varDelta)\int_{\varDelta} \int_{\C^n}\hsn{\Phi_z(u)}^2\D\gm(u)\D V(z),
\end{align*}
inequality \eqref{pdc} follows from the assumptions.
\end{proof}
\begin{cor}\label{venti}
Let $\Phi\in L^\infty\big(\gm; \hs{\hh}\big)$. Let $\varDelta\subseteq\C^n$ be a bounded Borel set. Then the integral operator $\IGR{\Theta_{\gamma_\varDelta,\Phi}}$ with $\gamma_\varDelta\colon \C^n\times\C^n\to\R$ given by $\gamma_\varDelta:=\chi_{\varDelta\times\C^n}$ is Hilbert-Schmidt on $\mathcal{B}\otimes \hh$. 
\end{cor}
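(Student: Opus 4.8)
The plan is to read off Corollary~\ref{venti} from Proposition~\ref{inths4new} applied with $\gamma=\gamma_\varDelta=\chi_{\varDelta\times\C^n}$, so that everything comes down to checking the three hypotheses of that proposition. Write $M=\|\Phi\|_{L^\infty(\gm;\hs{\hh})}$. Since $\|\Phi(z)\|\leqslant\hsn{\Phi(z)}$, we have $\Phi\in L^\infty(\gm;\ogr{\hh})$, and then for $k_\lambda\otimes g\in\KHH$ the estimate $\|\Phi(z)(k_\lambda\otimes g)(z)\|\leqslant M\,|k_\lambda(z)|\,\|g\|$ together with $\int_{\C^n}|k_\lambda|^2\,\D\gm=k_\lambda(\lambda)<\infty$ and linearity of $\dom{\TP{\Phi}}$ gives $\KHH\subseteq\dom{\TP{\Phi}}$; also $\gamma_\varDelta$ is Borel because $\varDelta$ is. So the only real task is the third hypothesis: since $|\gamma_\varDelta(z,w)|^2=\chi_\varDelta(z)$, the function to be controlled is
\[
\alpha(z)=\chi_\varDelta(z)\int_{\C^n}\hsn{\Xi_\Phi(z,w)}^2\,\D\gm(w),\qquad z\in\C^n,
\]
and I must show $\alpha\in L^1(V)$.

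The crux is the uniform bound
\[
\int_{\C^n}\hsn{\Xi_\Phi(z,w)}^2\,\D\gm(w)\ \leqslant\ \int_{\C^n}\hsn{\Phi_z(u)}^2\,\D\gm(u)\ \leqslant\ M^2,\qquad z\in\C^n.
\]
The second inequality is easy: $\tau_z$ preserves $\gm$-null sets, so $\hsn{\Phi_z(u)}=\hsn{\Phi(u+z)}\leqslant M$ for $\gm$-a.e.\ $u$, and $\gm$ is a probability measure. For the first inequality I would fix an orthonormal basis $\{g_i\}_{i=1}^\infty$ of $\hh$ and use the identity $\Xi_\Phi(z,w)g_i=\PROJ(\Phi_z\otimes g_i)(w)$ from Lemma~\ref{inths2combo} (recall $\Phi_z\otimes g_i\in\SB{L}{\hh}$, as follows from $\KHH\subseteq\dom{\TP{\Phi}}$). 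Then, by Tonelli's theorem applied twice and the fact that $\PROJ(\Phi_z\otimes g_i)\in\mathcal{B}\otimes\hh$,
\[
\int_{\C^n}\hsn{\Xi_\Phi(z,w)}^2\,\D\gm(w)=\sum_{i=1}^\infty\|\PROJ(\Phi_z\otimes g_i)\|^2\leqslant\sum_{i=1}^\infty\|\Phi_z\otimes g_i\|^2=\int_{\C^n}\hsn{\Phi_z(u)}^2\,\D\gm(u),
\]
where the inequality is the contractivity of the orthogonal projection $\PROJ$. (Equivalently, one may regard $\Xi_\Phi(z,\cdot)$ as the image of $\Phi_z\in\SB{L}{\hs{\hh}}$ under the orthogonal projection onto $\mathcal{B}\otimes\hs{\hh}$ and invoke its contractivity directly.)

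With the uniform bound at hand, $\alpha(z)\leqslant M^2\chi_\varDelta(z)$, so $\int_{\C^n}\alpha\,\D V\leqslant M^2 V(\varDelta)<\infty$ since $\varDelta$ is bounded; hence $\alpha\in L^1(V)$, and Proposition~\ref{inths4new} gives that $\IGR{\Theta_{\gamma_\varDelta,\Phi}}$ is Hilbert--Schmidt on $\mathcal{B}\otimes\hh$. I expect the main obstacle to be precisely the first inequality above: the pointwise estimate of Lemma~\ref{inths2combo}(i) carries the factor $k_w(w)$, which is not $\gm$-integrable in $w$, so it becomes useless once the second variable is allowed to range over all of $\C^n$ (in contrast to the situation in the preceding corollary, where that variable is confined to the bounded set $\varDelta$); recognizing that, column by column, $\Xi_\Phi(z,\cdot)$ is a Segal--Bargmann projection, and that $\gm$ has total mass one, is exactly what replaces that crude bound by the integrated one.
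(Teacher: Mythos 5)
Your proof is correct, and its skeleton is the same as the paper's: both reduce the corollary to Proposition \ref{inths4new} by checking that $\alpha(z)=\chi_\varDelta(z)\int_{\C^n}\hsn{\Xi_\Phi(z,w)}^2\D\gm(w)$ is in $L^1(V)$. The difference is in how the inner integral is controlled. The paper first proves the pointwise bound \eqref{dolcelatte}, $\hsn{\Xi_\Phi(z,w)}^2\leqslant C\exp\big(|w|^2/2\big)$, via Jensen's inequality with a tailor-made probability measure, and then integrates it against $\chi_\varDelta(z)\,k_z(z)\D\gm(z)\D\gm(w)$, using boundedness of $\varDelta$ and the $\gm$-integrability of $\exp(|w|^2/2)$. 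You instead note that, column by column, $w\mapsto\Xi_\Phi(z,w)g_i=\PROJ(\Phi_z\otimes g_i)(w)$ is the Segal--Bargmann projection of an $L^2$ function, so Tonelli plus contractivity of $\PROJ$ gives the uniform estimate $\int_{\C^n}\hsn{\Xi_\Phi(z,w)}^2\D\gm(w)\leqslant\int_{\C^n}\hsn{\Phi_z(u)}^2\D\gm(u)\leqslant M^2$, whence $\alpha\leqslant M^2\chi_\varDelta\in L^1(V)$ immediately; your verification of the side hypotheses ($\KHH\subseteq\dom{\TP{\Phi}}$, measurability of $\gamma_\varDelta$, translation invariance of the a.e. bound) is also fine. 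Your route is more elementary and even stronger for this corollary, since it gives a bound uniform in $z$ with no growth in $w$; what the paper's Jensen-based argument buys is precisely the pointwise inequality \eqref{dolcelatte}, which is reused twice in the proof of Theorem \ref{main} (in the Schur-test estimates \eqref{sczur1} and \eqref{sczur2}), so within the paper's overall economy the heavier estimate is not wasted, whereas your integrated bound would not suffice there.
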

\begin{proof}
First we show that
\begin{align}\label{dolcelatte}
\hsn{\Xi_\Phi(z,w)}^2\leqslant C\,\exp{\frac{|w|^2}{2}},
\end{align}
where $C>0$ is a constant such that $\hsn{\Phi(u)}\leqslant C$ for $\gm$-a.e. $u\in \C^n$. Indeed, let $\{g_i\}_{i=1}^\infty$ be an orthonormal basis for $\hh$. Then, applying Jensen's inequality with a probabilistic measure $\D\nu_{z,w}(t)=|k_w(t-z)|\exp\Big(-|t-z|^2-\frac{|w|^2}{4}\big)\D V(t)$,  we get\allowdisplaybreaks
\begin{align*}
\sum_{i,j=1}^\infty |\is{\Xi_\Phi(z,w)g_i}{g_j}|^2
&=\sum_{i,j=1}^\infty \Big|\int_{\C^n}\is{\Phi_z(u)g_i}{g_j}\overline{k_w(u)}\D\gm(u)\Big|^2\\
&\leqslant \sum_{i,j=1}^\infty \Big(\int_{\C^n}|\is{\Phi_z(u)g_i}{g_j}||k_w(u)|\D\gm(u)\Big)^2\\
&= \sum_{i,j=1}^\infty \Big(\int_{\C^n}|\is{\Phi(t)g_i}{g_j}||k_w(t-z)|\exp\big(-|t-z|^2\big) \D V(t)\Big)^2\\
&= \sum_{i,j=1}^\infty \Big(\int_{\C^n}|\is{\Phi(t)g_i}{g_j}|\exp\Big(\frac{|w|^2}{4}\Big) \D\nu_{z,w}(t)\Big)^2\\
&\leqslant \sum_{i,j=1}^\infty \int_{\C^n}|\is{\Phi(t)g_i}{g_j}|^2\exp\Big(\frac{|w|^2}{2}\Big) \D\nu_{z,w}(t)\\
&= \int_{\C^n}\hsn{\Phi(t)}^2\exp\Big(\frac{|w|^2}{2}\Big) \D\nu_{z,w}(t),
\end{align*}
which yields \eqref{dolcelatte}. Consequently, we deduce that
\begin{align*}
\int_{\C^n} k_z(z)\bigg( \int_{\C^n} |\gamma_\varDelta(z,w)|^2&\hsn{\Xi_\Phi(z,w)}^2\D\gm(w)\bigg)\D\gm(z)\\
&=\int_{\varDelta} k_z(z)\bigg( \int_{\C^n} \hsn{\Xi_\Phi(z,w)}^2\D\gm(w)\bigg)\D\gm(z)\\
&\leqslant C\int_\varDelta k_z(z) \int_{\C^n}\exp\bigg(\frac{|w|^2}{2}\bigg)\D\gm(w)\D\gm(z)<\infty.
\end{align*}
This implies that $\IGR{\Theta_{\gamma_\varDelta,\Phi}}$ is Hilbert-Schmidt (see the proof of Proposition \ref{inths4new}).
\end{proof}

Having the the above result, we are able now to provide a sufficient condition for the compactness of $\IGR{\Phi}$ \'{a} la Stroethoff.
\begin{thm}\label{main}
Let $\Phi\in L^\infty\big(\gm; \hs{\hh}\big)$. Suppose that$:$
\begin{align}\label{stendahl}
\lim_{|z|\to\infty}\int_{\C^n}\hsn{\Xi_\Phi(z,w)}^2\D\gm(w)=0.    
\end{align}
Then $\TP{\Phi}$ is compact.
\end{thm}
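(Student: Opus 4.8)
The plan is to mimic Stroethoff's classical argument in the vector-valued setting, using Proposition \ref{lem2} to pass to the adjoint and Corollary \ref{venti} as the source of Hilbert-Schmidt approximants. Since $\TP{\Phi}$ is compact if and only if $\TP{\Phi}^*$ is compact, and since $\hs{\mathcal{B}\otimes\hh}$ is contained in the compact operators and closed in operator norm, it suffices to produce a sequence of Hilbert-Schmidt operators converging to $\TP{\Phi}^*$ in $\ogr{\mathcal{B}\otimes\hh}$. By Proposition \ref{lem2} we have $\TP{\Phi}^*=\IGR{\Theta_\Phi}$, where $\Theta_\Phi(z,w)^*g=k_z(w)(\PROJ(\Phi_z\otimes g))(\tau_{-z}(w))$; note that $\Theta_\Phi=\Theta_{\gamma,\Phi}$ with $\gamma\equiv 1$ in the notation of Proposition \ref{inths4new}, because $\Xi_\Phi(z,\tau_{-z}(w))=(\PROJ(\Phi_z\otimes\,\cdot\,))(\tau_{-z}(w))$.

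Next I would fix the approximants: for $R>0$ put $\gamma_R=\chi_{\ball{R}\times\C^n}$ and let $\IGR{\Theta_{\gamma_R,\Phi}}$ be the integral operator of Corollary \ref{venti} (with $\varDelta=\ball{R}$), which is Hilbert-Schmidt. The remainder is the integral operator with kernel $\Theta_\Phi-\Theta_{\gamma_R,\Phi}=\Theta_{(1-\gamma_R),\Phi}$, i.e. the kernel supported on $\{|z|>R\}$ in the first variable. The key estimate is then a uniform operator-norm bound on this remainder: for $F\in\mathcal{B}\otimes\hh$,
\begin{align*}
\|(\IGR{\Theta_\Phi}-\IGR{\Theta_{\gamma_R,\Phi}})F\|^2
&\leqslant\int_{\C^n}\Big\|\int_{\C^n}(1-\gamma_R(z,v))\Theta_\Phi(z,v)F(v)\D\gm(v)\Big\|^2\D\gm(z)\\
&\leqslant\int_{|z|>R}\bigg(\int_{\C^n}\hsn{\Theta_\Phi(z,v)}\,\|F(v)\|\D\gm(v)\bigg)^2\D\gm(z)\\
&\leqslant\|F\|^2\int_{|z|>R}\int_{\C^n}\hsn{\Theta_\Phi(z,v)}^2\D\gm(v)\D\gm(z),
\end{align*}
using $\|A\|\leqslant\hsn{A}$ and Cauchy-Schwarz. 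Now, exactly as in the computation in the proof of Proposition \ref{inths4new}, the change of variables \eqref{COV} and Fubini give
\begin{align*}
\int_{|z|>R}\int_{\C^n}\hsn{\Theta_\Phi(z,v)}^2\D\gm(v)\D\gm(z)=\int_{|z|>R}k_z(z)\bigg(\int_{\C^n}\hsn{\Xi_\Phi(z,w)}^2\D\gm(w)\bigg)\D\gm(z).
\end{align*}
So I need this tail integral to tend to $0$ as $R\to\infty$; that is where hypothesis \eqref{stendahl} enters, but one must be careful because the Gaussian weight $\D\gm(z)$ is being multiplied by $k_z(z)=e^{|z|^2}$, so $k_z(z)\D\gm(z)=\pi^{-n}\D V(z)$ is just (a multiple of) Lebesgue measure—hence the tail does not automatically vanish merely from integrability.

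The main obstacle is therefore this last step: showing $\int_{|z|>R}\big(\int_{\C^n}\hsn{\Xi_\Phi(z,w)}^2\D\gm(w)\big)\D V(z)\to 0$ as $R\to\infty$. Pointwise decay of the inner integral in $z$ is exactly \eqref{stendahl}, but to integrate against the (infinite) Lebesgue measure on $\{|z|>R\}$ one needs a dominating, Lebesgue-integrable tail. Here I would exploit that $\Phi\in L^\infty(\gm;\hs{\hh})$: using the Jensen-inequality bound \eqref{dolcelatte} from the proof of Corollary \ref{venti}, $\hsn{\Xi_\Phi(z,w)}^2\leqslant C\exp(|w|^2/2)$ uniformly in $z$; but that is not yet enough since it is $z$-independent. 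The correct route is a sharper, $z$-dependent bound on $\int_{\C^n}\hsn{\Xi_\Phi(z,w)}^2\D\gm(w)$ with Gaussian-type decay in $z$ (coming from the fact that $\Phi_z=\Phi\circ\tau_z$ and $\|\Phi\|_{L^\infty}<\infty$), against which one can dominate and then invoke dominated convergence together with \eqref{stendahl}. Writing $\beta(z):=\int_{\C^n}\hsn{\Xi_\Phi(z,w)}^2\D\gm(w)$, one shows $\beta(z)\leqslant C'e^{-c|z|^2}$ for suitable $C',c>0$ so that $z\mapsto e^{|z|^2}\beta(z)$ is $V$-integrable, and then $e^{|z|^2}\beta(z)\to 0$ pointwise by \eqref{stendahl}; an $\varepsilon$-split of the $z$-integral into $\{R<|z|\le R'\}$ (handled by \eqref{stendahl} making $\beta$ small, with $e^{|z|^2}$ controlled by... no—on this annulus $e^{|z|^2}$ is not bounded as $R'\to\infty$) must again be reconciled with the exponential weight, which is precisely why the Gaussian decay bound on $\beta$ is needed. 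Once that tail bound is in hand, choosing $R$ large makes $\|\IGR{\Theta_\Phi}-\IGR{\Theta_{\gamma_R,\Phi}}\|$ arbitrarily small, so $\TP{\Phi}^*$ is a norm-limit of Hilbert-Schmidt operators, hence compact, and therefore so is $\TP{\Phi}$.
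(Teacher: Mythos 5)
Your overall scheme (pass to $\TP{\Phi}^*=\IGR{\Theta_\Phi}$ via Proposition \ref{lem2}, cut off the kernel in the first variable, use Corollary \ref{venti} for the Hilbert--Schmidt pieces) is exactly the paper's, but the step you yourself flag as ``the main obstacle'' is a genuine gap, and the fix you propose does not work. Bounding the remainder by Cauchy--Schwarz forces you to control $\int_{\{|z|>R\}}\beta(z)\,\D V(z)$, where $\beta(z)=\int_{\C^n}\hsn{\Xi_\Phi(z,w)}^2\D\gm(w)$. Hypothesis \eqref{stendahl} gives only the pointwise decay $\beta(z)\to0$, and the Gaussian bound $\beta(z)\leqslant C'e^{-c|z|^2}$ you invoke is false in general: from $\Phi\in L^\infty(\gm;\hs{\hh})$ one gets, via \eqref{dolcelatte}, only a bound on $\beta$ that is \emph{uniform} in $z$ (translating the symbol produces no decay at all --- for constant $\Phi$, $\beta$ is constant). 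Moreover, even under \eqref{stendahl} the decay can be arbitrarily slow: take $\Phi(z)=c(z)B$ with $B$ a fixed rank-one operator and $c$ a bounded scalar symbol for which $\|\mathrm{P}(c\circ\tau_z)\|^2$ decays like $1/\log|z|$; then \eqref{stendahl} holds but $\beta\notin L^1(V)$, so the truncated remainder kernel is not even Hilbert--Schmidt and your estimate gives nothing. There is no dominating function to feed to dominated convergence, so the argument cannot be completed along this route.

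The paper's proof avoids this by estimating the remainder $\IGR{\Theta^{(r)}}$ in \emph{operator} norm rather than through the Hilbert--Schmidt norm of its kernel, using the Schur test with the weight $k_z(z)^{1/2}=e^{|z|^2/2}$. One Schur inequality (\eqref{sczur1}) uses only the uniform bound \eqref{dolcelatte} and yields a constant $\alpha$ independent of $r$; the other (\eqref{sczur2}) applies H\"older's inequality with exponents $3$ and $\tfrac32$ to peel off the factor $\big(\int_{\C^n}\hsn{\Xi_\Phi(z,t)}^2\D\gm(t)\big)^{1/3}$, so that the resulting constant $\beta(r)$ is controlled by $\sup_{\{|z|\geqslant r\}}\beta(z)^{1/3}$, which tends to $0$ precisely by \eqref{stendahl}. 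The Schur test then gives $\|\IGR{\Theta^{(r)}}\|^2\leqslant\alpha\,\beta(r)\to0$. The essential point you are missing is that the hypothesis supplies only \emph{uniform smallness} of $\beta$ at infinity, not integrability against Lebesgue measure, so the norm estimate must be of Schur type (weighted $L^\infty$-style) rather than of Hilbert--Schmidt (kernel-$L^2$) type; once you replace your last step by this Schur-test estimate, the rest of your argument goes through.
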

\begin{proof}
For a given $r>0$ we define a function $\gamma_r\colon \C^n\times\C^n\to\R$ by $\gamma_r(z,w)=\chi_{\{|z|<r\}\times \C^n}$, where as usual $\{|z|<r\}:=\{z\in\C^n \colon |z|<r\}$. Then, by Corollary \ref{venti}, the integral operator $\IGR{\Theta_{\gamma_r,\Phi}}$ is Hilbert-Schmidt for every $r>0$. Hence, showing that $\IGR{\Theta_{\gamma_r,\Phi}}\to \TP{\Phi}^*$ as $r\to\infty$ yields the claim. 

Observe that by Proposition \ref{lem2} we have
\begin{align}\label{paszporty}
\TP{\Phi}^*-\IGR{\Theta_{\gamma_r,\Phi}}=\IGR{\Theta_\Phi}-\IGR{\Theta_{\gamma_r,\Phi}}=\IGR{\Theta^{(r)}},
\end{align}
where
\begin{align*}
\Theta^{(r)}(z,w)=\Theta_{\eta_r,\Phi}(z,w),\quad z,w\in\C^n,
\end{align*}
with $\eta_r=\chi_{\{|z|\geqslant r\}\times \C^n}$. We will evaluate the norm of $\IGR{\Theta^{(r)}}$ using the Schur test. First, we note that there exists a constant $\alpha>0$ such that
\begin{align}\label{sczur1}
\int_{\C^n}\eta_r(z)|k_z(w)|\hsn{\Xi_{\Phi}(z,\tau_{-z}(w))}k_z(z)^{\frac12}\D\gm(z)\leqslant \alpha\, k_w(w)^{\frac12},\quad w\in\C^n.
\end{align}
Indeed, by \eqref{dolcelatte}, we have
\begin{align*}
\int_{\C^n}\eta_r(z)|k_z(w)|\hsn{\Xi_{\Phi}(z,\tau_{-z}(w))}&k_z(z)^{\frac12}\D\gm(z)
\leqslant C\int_{\C^n} |k_z(w)|\exp\Big(\frac{|\tau_{-z}(w)|}{4}\Big)k_z(z)^{\frac12}\D\gm(z)\\
&=C\,k_w(w)^{\frac14}\int_{\C^n}|k_z(w)|^{\frac12}\exp\Big(-\frac{|z|^2}{4}\Big)\D V(z).
\end{align*}
Since the last integral equals $2^n(2\pi)^n k_w(w)^{\frac{1}{4}}$, we get \eqref{sczur1}. On the other hand, there exists a function $\beta\colon \R_+\to\R_+$ such that 
\begin{align}\label{sczur1.5}
\lim_{r\to\infty}\beta(r)=0    
\end{align}
and
\begin{align}\label{sczur2}
\int_{\C^n}\eta_r(z)|k_z(w)|\hsn{\Xi_{\Phi}(z,\tau_{-z}(w))}k_w(w)^{\frac12}\D\gm(w)\leqslant \beta(r)\, k_z(z)^{\frac12},\quad z\in\C^n.
\end{align}
This can be proved as follows. By the change-of-variable theorem we have
\begin{align*}\allowdisplaybreaks
\int_{\C^n}\eta_r(z)|k_z(w)|&\hsn{\Xi_{\Phi}(z,\tau_{-z}(w))}k_w(w)^{\frac12}\D\gm(w)\\
&= \int_{\C^n}\eta_r(z)|k_z(\tau_z(t))|\hsn{\Xi_{\Phi}(z,t)}k_{\tau_z(t)}(\tau_z(t))^{\frac12}\exp\big(-|\tau_z(t)|^2\big) \D V(t)\\
&=\eta_r(z) k_z(z)^{\frac12}\int_{\C^n}\hsn{\Xi_{\Phi}(z,t)}\exp\Big(-\frac{|t|^2}{2}\Big) \D V(t) .
\end{align*}
Since $\exp\Big(-\frac{|t|^2}{2}\Big)=\exp\Big(-\frac{5|t|^2}{12}\Big) \exp\Big(-\frac{|t|^2}{12}\Big)$, applying H\"older's inequality with conjugate exponents 3 and $\frac{3}{2}$ and later \eqref{dolcelatte} we see that
\begin{align*}
\int_{\C^n}\eta_r(z)&|k_z(w)|\hsn{\Xi_{\Phi}(z,\tau_{-z}(w))}k_w(w)^{\frac12}\D\gm(w)\\
&\leqslant
\eta_r(z) k_z(z)^{\frac12}\bigg(\int_{\C^n}\hsn{\Xi_{\Phi}(z,t)}^3\exp\Big(-\frac{5|t|^2}{4}\Big) \D V(t)\bigg)^\frac{1}{3}\bigg( \int_{\C^n} \exp\Big(-\frac{|t|^2}{8}\Big)\D V(t)\bigg)^{\frac23}\\
&\leqslant
\eta_r(z) k_z(z)^{\frac12} \Big(2 (2\pi)^n\Big)^{\frac23} C^{\frac16}\bigg(\int_{\C^n}\hsn{\Xi_{\Phi}(z,t)}^2\D\gm(t)\bigg)^\frac{1}{3}.
\end{align*}
Hence, in view of \eqref{stendahl}, we deduce \eqref{sczur1.5} and \eqref{sczur2} with 
\begin{align*}
\beta(r)=\big(2 (2\pi)^n\big)^{\frac23} C^{\frac16}\sup\limits_{\{|z|\geqslant r\}}\bigg(\int_{\C^n} \hsn{\Xi_{\Phi}(z,t)}^2\D\gm(t)\bigg)^{\frac13}, \quad r\in\R_+    
\end{align*}
Now, using the Schur test (see \cite[Theorem 5.2]{hal-sun}), \eqref{sczur1}, and \eqref{sczur2} we get
\begin{align*}
\|\IGR{\Theta^{(r)}}\|^2\leqslant \alpha\,\beta(r),\quad r\in\R_+.
\end{align*}
Therefore, by \eqref{sczur1.5} and \eqref{paszporty}, $\TP{\Phi}$ is compact.
\end{proof}
It remains an open problem as to whether condition \eqref{stendahl} is necessary for compactness of $\TP{\Phi}$ with $\hs{\hh}$-valued symbols. Adapting the methods from the classical complex-valued case we get a weaker condition.
\begin{pro}\label{necc}
Let $\TP{\Phi}\in\ogr{\mathcal{B}\otimes \hh}$ be compact. Then
\begin{align}\label{rindo}
\lim_{|z|\to\infty}\int_{\C^n}\|(\PROJ \Phi_z\otimes g)(w)\|^2\D\gm(w)=0,\quad g\in \hh.
\end{align}
\end{pro}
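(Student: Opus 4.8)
The plan is to detect the compactness of $\TP{\Phi}$ by testing it against the normalised reproducing kernels $\|k_\lambda\otimes g\|^{-1}(k_\lambda\otimes g)$ and letting $|\lambda|\to\infty$. Fix $g\in\hh$; the assertion is vacuous when $g=0$, so assume $g\neq0$. Since $\TP{\Phi}$ is a bounded everywhere-defined operator, $\KHH\subseteq\dom{\TP{\Phi}}$, and in particular Lemma \ref{lem1} is applicable to every $k_\lambda\otimes g$.

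The computational heart of the argument is the identity
\begin{align*}
\|\TP{\Phi}(k_\lambda\otimes g)\|^2=k_\lambda(\lambda)\int_{\C^n}\big\|\big(\PROJ(\Phi_\lambda\otimes g)\big)(w)\big\|^2\D\gm(w),\qquad\lambda\in\C^n.
\end{align*}
To prove it I would begin with Lemma \ref{lem1}, which gives
\begin{align*}
\|\TP{\Phi}(k_\lambda\otimes g)\|^2=\int_{\C^n}|k_\lambda(z)|^2\,\big\|\big(\PROJ(\Phi_\lambda\otimes g)\big)(\tau_{-\lambda}(z))\big\|^2\D\gm(z),
\end{align*}
and then change variables by means of \eqref{COV} (with $\lambda$ replaced by $-\lambda$), applied to the function $z\mapsto|k_\lambda(z+\lambda)|^2\big\|\big(\PROJ(\Phi_\lambda\otimes g)\big)(z)\big\|^2$, using the elementary relations $k_\lambda(z+\lambda)=k_\lambda(z)k_\lambda(\lambda)$ and $\widetilde k_{-\lambda}(w)=\big(|k_\lambda(w)|^2k_\lambda(\lambda)\big)^{-1}$. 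Equivalently, one may observe that the rescaled translation $F\mapsto k_\lambda\cdot(F\circ\tau_{-\lambda})$ multiplies the $\SB{L}{\hh}$-norm by the factor $k_\lambda(\lambda)^{1/2}$, and that $\TP{\Phi}(k_\lambda\otimes g)$ is exactly the image of $\PROJ(\Phi_\lambda\otimes g)$ under this map. The only point that requires care here is keeping track of the several occurrences of $k_\lambda(\lambda)$; I expect this bookkeeping to be the main (and essentially the only) obstacle in the proof.

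Combining this identity with the elementary norm computation $\|k_\lambda\otimes g\|^2=\|k_\lambda\|^2\|g\|^2=k_\lambda(\lambda)\|g\|^2$ yields
\begin{align*}
\Big\|\TP{\Phi}\Big(\frac{k_\lambda\otimes g}{\|k_\lambda\otimes g\|}\Big)\Big\|^2=\frac{1}{\|g\|^2}\int_{\C^n}\big\|\big(\PROJ(\Phi_\lambda\otimes g)\big)(w)\big\|^2\D\gm(w),\qquad\lambda\in\C^n.
\end{align*}
To finish I would invoke two standard facts. First, $\|k_\lambda\otimes g\|^{-1}(k_\lambda\otimes g)\to0$ weakly in $\mathcal B\otimes\hh$ as $|\lambda|\to\infty$: these vectors have norm $1$, while for each $\mu\in\C^n$ and $h\in\hh$ the reproducing property gives $\big|\is{\|k_\lambda\otimes g\|^{-1}(k_\lambda\otimes g)}{k_\mu\otimes h}\big|\leqslant\|g\|^{-1}|\is{g}{h}|\exp\big(|\lambda||\mu|-\frac{1}{2}|\lambda|^2\big)\to0$, and $\KHH$ is total in $\mathcal B\otimes\hh$. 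Second, a bounded operator is compact if and only if it carries weakly null sequences to norm null sequences; since it suffices to verify \eqref{rindo} along sequences $\lambda_n$ with $|\lambda_n|\to\infty$, compactness of $\TP{\Phi}$ gives $\big\|\TP{\Phi}\big(\|k_{\lambda_n}\otimes g\|^{-1}(k_{\lambda_n}\otimes g)\big)\big\|\to0$, and by the displayed identity this is precisely \eqref{rindo}.
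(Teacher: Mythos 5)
Your argument is correct and is essentially the paper's own proof: apply Lemma \ref{lem1}, change variables via \eqref{COV} to obtain the identity $\big\|\TP{\Phi}\big(\|k_\lambda\otimes g\|^{-1}k_\lambda\otimes g\big)\big\|^2=\|g\|^{-2}\int_{\C^n}\|\PROJ(\Phi_\lambda\otimes g)(w)\|^2\D\gm(w)$, and then use that the normalised kernels tend weakly to $0$ together with the fact that compact operators send weakly null sequences to norm null ones. The only cosmetic difference is that you verify the weak convergence by testing against $\KHH$, while the paper tests against polynomial-valued simple tensors; both are routine.
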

\begin{proof}
Applying twice Lemma \ref{lem1} we get
\begin{align*}
\|\TP{\Phi} (k_z\otimes g)\|^2
=\int_{\C^n}|k_z(w)|^2\|\PROJ(\Phi_z\otimes g)(\tau_{-z}(w))\|^2\D\gm(w),\quad z\in\C^n, g\in\hh.
\end{align*}
Combining this with \eqref{COV} and
\begin{align}\label{hite}
\|k_z \otimes g\|=\|g\| \exp{|z|^2},\quad z\in\C^n, g\in\hh,
\end{align}
yields
\begin{align}\notag
\bigg\|\TP{\Phi} \bigg( \frac{k_z\otimes g}{\|k_z\otimes g\|}\bigg)\bigg\|^2
&=\frac{1}{\|g\|^2}\int_{\C^n}\|\PROJ(\Phi_z\otimes g)(\tau_{-z}(w))\|^2 \widetilde k_z(w) \D\gm(w)\\
&=\frac{1}{\|g\|^2}\int_{\C^n}\|\PROJ(\Phi_z\otimes g)(w)\|^2 \D\gm(w),\quad z\in\C^n, g\in\hh. \label{tnorm}
\end{align}
The compactness of $\TP{\Phi}$ implies that $\|\TP{\Phi} x_\iota\|$ converges to $0$ for any sequence $\{x_\iota\}$ converging to $0$ weakly. In particular, we deduce from \eqref{hite} and
\begin{align*}
|\is{p\otimes h}{k_z\otimes g}|=|\is{p(z) h}{g}|\leqslant |p(z)| \|h\|\|g\|,\quad z\in \C^n, p\in\PLN, h,g\in \hh, 
\end{align*}
that
\begin{align*}
\lim_{|z|\to\infty} \bigg\|\TP{\Phi} \bigg( \frac{k_z\otimes g}{\|k_z\otimes g\|}\bigg)\bigg\|^2=0. 
\end{align*}
This, in view of \eqref{tnorm}, proves the claim.
\end{proof}
It is worth noting that there is quite a gap between conditions \eqref{stendahl} and \eqref{rindo} as shown in the following examples. The first of them shows that even in a very simplified situation the integrals in \eqref{stendahl} and \eqref{rindo} differ essentially and hints the importance of using the Hilbert-Schmidt norm.
\begin{exa}\label{star1}
Let $\hh$ be an infinite dimensional Hilbert space and $A\in\ogr{\hh}$ be an operator that is not Hilbert-Schmidt. Define $\Phi\colon \C^n\to\ogr{\hh}$ by $\Phi(z)=A$. Clearly, $\Phi$ is analytic,
\begin{align*}
\int_{\C^n} \|\PROJ(\Phi_z\otimes g)(w)\|^2\D\gm(w)
&=\int_{\C^n} \|Ag\|^2\D\gm(w)<\infty,\quad z\in\C^n, g\in\hh,
\end{align*}
and
\begin{align*}
\int_{\C^n}\|\Xi_\Phi(z,w)\|^2\D\gm(w)=\int_{\C^n} \|A\|^2\D\gm(w)<\infty,\quad z\in\C^n.
\end{align*}
On the other hand, for an orthonormal basis $\{g_i\}_{i=1}^\infty$ of $\hh$ we have
\begin{align*}
\hsn{\Xi_\Phi(z,w)}^2=\sum_{i,j=1}^\infty \big|\is{\Phi_z(w)g_j}{g_j}\big|^2=\hsn{A}^2=\infty,\quad z,w\in\C^n,
\end{align*}
which implies that
\begin{align*}
\int_{\C^n}\hsn{\Xi_{\Phi}(z,w)}^2\D\gm(w)=\infty,\quad z\in \C^n.
\end{align*}
Note that $\TP{\Phi}$ is not compact.
\end{exa}
The second example provides a non-compact vectorial Toeplitz operator which satisfies \eqref{rindo}. Its symbol is not $\hs{\hh}$-valued and \eqref{stendahl} does not hold.
\begin{exa}\label{star2}
Let $\hh$ be an infinite dimensional Hilbert space. Let $\phi\colon\C^n\to\C$ be a non-zero function such that the classical Toeplitz operator $\TP{\phi}$ on $\mathcal{B}$ is compact. In view of \cite[Theorem 5]{str-mmj-1992} we necessarily have
\begin{align}\label{bluenote}
\lim_{|z|\to\infty} \int_{\C^n} |\mathrm{P}(\phi\circ\tau_z)(w)|^2 \D\gm(w)=0,
\end{align}
where $\mathrm{P}$ denotes an orthogonal projection from $L^2(\gm)$ onto $\mathcal{B}$. This implies that for a function $\Phi\colon\C^n\to\ogr{\hh}$ given by $\Phi(z)=\phi(z)\,I$, $I$ being the identity operator on $\hh$, we have
\begin{align*}
\int_{\C^n}\|\PROJ\big(\Phi_z\otimes g\big)(w)\|^2 \D\gm(w)
&=\int_{\C^n}\|\PROJ\big(\phi_z\otimes g\big)(w)\|^2 \D\gm(w)\\
&=\int_{\C^n}|\mathrm{P}(\phi\circ \tau_z)\big)(w)|^2 \|g\|^2\D\gm(w),\quad z\in\C^n,g\in\hh,
\end{align*}
and thus by \eqref{bluenote} we have
\begin{align*}
\lim_{|z|\to\infty}\int_{\C^n}\|\PROJ\big(\Phi_z\otimes g\big)(w)\|^2 \D\gm(w)=0,\quad g\in\hh.
\end{align*}
On the other hand, since the identity operator on an infinite dimensional Hilbert space is not compact, we have
\begin{align*}
\int_{\C^n}\hsn{\Xi_{\Phi}(z,w)}^2\D\gm(w)
&=\int_{\C^n}|\mathrm{P}(\phi\circ\tau_z)(w))|^2\hsn{I}^2\D\gm(w) =\infty,\quad z\in \C^n.
\end{align*}
Clearly, $\TP{\Phi}$ is not compact as the image of $\{\chi_{\C^n}\otimes e_i\colon i\in\N\}$ via $\TP{\Phi}$, where $\{e_i\colon i\in\N\}$ is an orthonormal basis of $\hh$, does not contain a convergent subsequence.
\end{exa}
The next example, which was pointed to us be an anonymous referee, is showing that condition \eqref{stendahl} may characterize the compactness of $\TP{\Phi}$ only in the case when $\Phi$ is $\hs{\hh}$-valued. The latter is not necessary in general. 
\begin{exa}\label{taebaek}
We modify the symbol $\Phi$ from Example \ref{star2} by replacing the identity operator by any compact operator $A$ which is not Hilbert-Schmidt, i.e., $\Phi(z)=\phi(z)A$, $z\in\C^n$, where $\phi\colon\C^n\to\C$ is a non-zero function such that $\TP{\phi}$ on $\mathcal{B}$ is compact. Then \eqref{rindo} is satisfied while \eqref{stendahl} is not, just as in Example \eqref{star2}. Nonetheless, $\TP{\Phi}$ is compact. Indeed, since $\TP{\phi}$ and $A$ are compact for any bounded sequences $\{f_i\}_{i=1}^\infty\subseteq L^2(\gm)$ and $\{g_j\}_{j=1}^\infty\subseteq \hh$ there are subsequences $\{f_{i_k}\}_{k=1}^\infty$ and $\{g_{j_k}\}_{k=1}^\infty$ such that $\{\TP{\phi} f_{i_k}\}_{k=1}^\infty$ and $\{Ag_{j_k}\}_{k=1}^\infty$ which are convergent. This implies that $\{\TP{\Phi} f_{i_k}\otimes g_{j_k}\}_{k=1}^\infty$ is convergent. Since the simple tensors $\{f\otimes g\colon f\in L^2(\gm), g\in\hh\}$ are dense in $L^2(\gm)\otimes\hh$, we deduce that $\TP{\Phi}$ is compact.
\end{exa}
We conclude the paper with bringing up to the reader's attention the questions that remains open. The first asks whether there exists a compact $\TP{\Phi}$ induced by a $\hs{\hh}$-valued symbol $\Phi$ for which \eqref{stendahl} is not satisfied. The second asks if there is a $\hs{\hh}$-valued symbol $\Phi$ such that \eqref{rindo} holds but $\TP{\Phi}$ is not compact.
\section*{Acknowledgments}
The first author was supported by the Polish National Science Centre (NCN) Opus grant no. 2017/25/B/ST1/00906. The second author was supported by the SONATA BIS grant no. UMO- 2017/26/E/ST1/00723 financed by the National Science Centre, Poland. The third named author was supported by Basic Science Research Program through the National Research Foundation of Korea(NRF) funded by the Ministry of Education(NRF-2015R1C1A1A01053837)

A substantial part of the paper was written while the first and the second author were visiting the Department of Mathematics of Chungnam National University during the autumn of 2018. They wish to thank the faculty and the administration of the unit for warm hospitality.

We would like to thank an anonymous referee for many valuable comments which helped to improve the paper, in particular for pointing out a symbol $\Phi$ of Example \ref{taebaek}.
\bibliographystyle{amsalpha}

\end{document}